\newtheorem{thm}{Theorem}[section]
\newtheorem{cor}[thm]{Corollary}
\newtheorem{lemma}[thm]{Lemma}
\newtheorem{prop}[thm]{Proposition}
\numberwithin{equation}{section}
\theoremstyle{definition}
\newtheorem{rem}[thm]{Remark}
\newcommand{\bC}{{\mathbb{C}}}
\newcommand{\bR}{{\mathbb{R}}}
\newcommand{\bZ}{{\mathbb{Z}}}
  \newcommand{\A}{{\mathcal{A}}}
  \newcommand{\C}{{\mathcal{C}}}
  \newcommand{\I}{{\mathcal{I}}}
  \newcommand{\K}{{\mathcal{K}}}
\renewcommand{\L}{{\mathcal{L}}}
  \newcommand{\N}{{\mathcal{N}}}
\renewcommand{\S}{{\mathcal{S}}}
  \newcommand{\U}{{\mathcal{U}}}
\newcommand{\ol}{\overline  }
\newcommand{\esssupp}{\operatorname{ess-sup}}
\begin{document}


\title[The operator algebra for
translation, dilation and multiplication]{The operator algebra generated by the
translation, dilation and multiplication semigroups}



\author[E. Kastis  and S. C. Power]{E. Kastis and S. C. Power}
\address{Dept.\ Math.\ Stats.\\ Lancaster University\\
Lancaster LA1 4YF \\U.K. }

\email{l.kastis@lancaster.ac.uk}
\email{s.power@lancaster.ac.uk}


\thanks{2010 {\it  Mathematics Subject Classification.}
 {47L75, 47L35 } \\
Key words and phrases: {operator algebra, nest algebra, binest, Lie semigroup}}

\begin{abstract}
The weak operator topology closed operator algebra on $L^2(\bR)$ generated by the one-parameter semigroups for translation, dilation and multiplication by  $e^{i\lambda x}, \lambda \geq 0,$ is shown to be a reflexive operator algebra, in the sense of Halmos, with invariant subspace lattice equal to a binest. 
This triple semigroup algebra,  $\A_{ph}$, is antisymmetric in the sense that $\A_{ph}\cap \A_{ph}^*=\bC I$, it  
has a nonzero proper weakly closed ideal generated by the finite-rank operators, and its unitary automorphism group is $\bR$. Furthermore, the $8$ choices of semigroup triples provide $2$ unitary equivalence classes of operator algebras, with $\A_{ph}$ and $\A_{ph}^*$ being chiral representatives.
\end{abstract}
\date{}

\maketitle

\section{Introduction}
Let $D_\mu$ and $ M_\lambda$  be the unitary operators on the Hilbert space
$L^2(\bR)$ given by
\[
D_\mu f(x) = f(x-\mu),\quad  M_\lambda f(x) = e^{i\lambda x}f(x)
\]
where  $\mu, \lambda$ 
are real. As is well-known, the $1$-parameter unitary groups
$\{D_\mu, \mu \in \bR\}$ and $\{M_\lambda, \lambda \in \bR\}$  provide an irreducible representation of the Weyl-commutation relations, $M_\lambda D_\mu
= e^{i\lambda \mu} D_\mu M_\lambda$, and the weakly closed operator algebra they generate is the von Neumann algebra  $B(L^2(\bR))$ of all bounded operators. (See Taylor \cite{tay}, for example.) On the other hand it was shown by Katavolos and Power in \cite{kat-pow-1} that the weakly closed nonselfadjoint operator algebra  generated by the semigroups for $\mu \geq 0$ and $ \lambda \geq 0$ is a proper subalgebra containing no self-adjoint operators, other than real multiples of the identity, and no nonzero finite rank operators. We consider here an intermediate weakly closed operator algebra  which is generated by the  semigroups for $\mu \geq 0$ and $ \lambda \geq 0$,
together with the semigroup of dilation operators  $V_t, t\geq 0,$ where
\[
V_t f(x) = e^{t/2}f(e^tx).
\]
Our main result is that this operator algebra is reflexive in the sense of Halmos (see \cite{rad-ros})
and, moreover, is equal to $Alg \L$, the algebra of operators that leave invariant each subspace in the  lattice $\L$  of closed subspaces given by
\[
 \L=  \{0\}\cup\{L^2(-\alpha,\infty), \alpha\geq 0\}\cup \{e^{i\beta x}H^2(\bR), \beta \geq 0\}\cup\{L^2(\bR)\}
\]
where  $H^2(\bR)$ is the usual Hardy space for the upper half plane. This lattice is a binest, being the union of two complete nests of closed subspaces.
\bigskip

We denote the triple semigroup algebra by $\A_{ph}$ since it is generated by $\A_{p}$, the operator algebra for the translation and multiplication semigroups, and $\A_{h}$, the operator algebra for the  multiplication and dilation semigroups. The notation reflects the fact that translation unitaries here are induced by the biholomorphic automorphims of the upper half plane which are of parabolic type,
and the dilation unitaries are induced by those of hyperbolic type.
The hyperbolic algebra
 $\A_{h}$ was considered by Katavolos and Power in \cite{kat-pow-2} and the invariant subspace lattice
$Lat \A_{h}$, viewed as a lattice of projections with the weak operator topology, was identified as  a $4$-dimensional manifold.
See also Levene and Power \cite{lev-pow-2} for an alternative derivation.

The operator algebras considered here are basic examples of   Lie semigroup algebras \cite{kat-pow-1} by which we mean  a weak operator topology closed algebra generated by the image of a Lie semigroup in a unitary representation of the ambient Lie group.
A complexity in the analysis of such algebras, defined in terms of generators, is the task of constructing operators within them with prescribed properties.  Establishing reflexivity can provide a route to constructing such operators and thereby deriving further algebraic properties.
The reflexivity of the hyperbolic algebra, that is, the identity 
 $\A_{h} = Alg Lat\A_{h}$, was obtained by Levene and Power in
 \cite{lev-pow-1} while the reflexivity of the parabolic algebra $\A_{p}$ was shown earlier in \cite{kat-pow-1}. 
We also note that Levene \cite{lev-2} has shown the reflexivity of the Lie semigroup operator algebra of $SL_2(\bR_+)$ for its standard representation on
$L^2(\bR)$ in terms of the composition operators of biholomorphic automorphisms.

The parabolic algebra $\A_{p}$ in fact  coincides with the Fourier binest algebra $Alg \L_{\rm FB}$, the reflexive algebra for the lattice $\L_{\rm FB}$, the Fourier binest, given by
\[
\L_{\rm FB} = \{0\}\cup \{L^2(-\alpha,\infty), \alpha\in \bR\}\cup \{e^{i\beta x}H^2(\bR), \beta \in \bR\}\cup\{L^2(\bR)\}
\]
With the weak operator topology for the orthogonal projections of these spaces, $\L_{\rm FB}$  is homeomorphic to the unit circle and forms the topological boundary of a bigger lattice $Lat Alg \L_{\rm FB}$, the so-called reflexive closure of $\L_{\rm FB}$. This  lattice is equal to the full lattice $Lat \A_{p}$ of all closed invariant subspaces of $\A_{p}$ and is homeomorphic to the unit disc. In contrast we see that the binest $\L$ for $\A_{ph}$ is reflexive as a lattice of subspaces; $\L = Lat Alg \L$.

A complexity in establishing the reflexivity of $\A_{p}$, $\A_{h}$ and $\A_{ph}$  
is the absence of an approximate identity of finite rank operators,
a key device in the theory of nest algebras (Davidson \cite{dav}, Erdos \cite{erd} and Erdos and Power \cite{erd-pow}). The same might be said of $H^\infty(\bR)$, the classical Lie semigroup  algebra with which these operator algebras bear some affinities.  As a substitute we identify the dense subspace  $\A_{ph}\cap \C_2$ of Hilbert-Schmidt integral operators. Also,  by exploiting  the Hilbert space  geometry of $\C_2$
we are able to identify various subspaces of $\C_2$ associated with the algebras $\A_{p}, \A_{h}, \A_{ph}$ and their containing nest algebras.

As in the analysis of $\A_p$ and $\A_h$ the classical Paley-Wiener (in the form $FH^2(\bR)=L^2(\bR_+)$) and the F. and M. Riesz theorem feature repeatedly in our arguments. Also, for the determination of the subspace  $\A_{ph}\cap \C_2$ we obtain a two-variable variant of the Paley-Wiener theorem which is of independent interest. See Corollary \ref{c:PWcorollary}. This asserts that if a function $k(x,y)$ in $L^2(\bR^2)$ vanishes on a proper cone $C$ with angle less than $\pi$, and its two-variable Fourier transform $F_2k$ vanishes on the (anticlockwise) rotated cone $R_{-\pi/2}C$,  then $k$ lies in the closed linear span of a pair of extremal subspaces with this property. These subspaces are rotations of the "quarter subspace" $L^2(\bR_+)\otimes H^2(\bR)$.  

We also obtain the following further properties.
The triple semigroup algebra $\A_{ph}$ is antisymmetric (or triangular \cite{kad-sin})
in the sense that $\A_{ph}\cap \A_{ph}^*=\bC I$. In contrast to
$\A_p$ and $\A_h$ the algebra $\A_{ph}$ contains non-zero finite rank operators and these generate a proper weak operator topology closed ideal. Also, $\A_{ph}$ has the rigidity property that its  unitary automorphism group is isomorphic to $\bR$ and implemented by the group of  dilation unitaries. 

We also see that, unlike the parabolic algebra, $\A_{ph}$ has \emph{chirality}
in the sense that  $\A_{ph}$ and  $\A_{ph}^*$ are not unitarily equivalent despite being  the reflexive algebras of \emph{spectrally isomorphic} binests. Furthermore
the $8$ choices of triples of continuous proper semigroups from
$\{M_\lambda, \lambda \in \bR\}$, $\{D_\mu:\mu \in \bR\}$ and
$\{V_t:t \in \bR\}$ give rise to exactly $2$ unitary equivalence classes of operator algebras.

\section{Preliminaries}

We start by introducing notation and terminology and by recalling some basic facts about the parabolic algebra, its subspace of Hilbert-Schmidt operators and its invariant subspaces.
 
The Volterra nest  $\mathcal{N}_v$ is the continuous nest consisting of the subspaces $L^2([\lambda,+\infty))$, for $\lambda\in\bR$, together with the trivial subspaces $\{0\},L^2(\bR)$. The analytic nest $\N_a$ is defined to be the unitarily equivalent nest $F^\ast \N_v$, where $F$ is Fourier-Plancherel transform with
\[
Ff(x)=\frac{1}{\sqrt{2\pi}}\int_\bR f(t)e^{-itx}dt
\] 
By the Paley-Wiener theorem the analytic nest consists of the chain of subspaces 
\[
e^{isx}H^2(\bR), \quad s \in \bR,
\] 
together with the trivial subspaces. These nests determine  the Volterra nest algebra $\A_v=Alg\N_v$ and the analytic nest algebra $\A_a=Alg\N_a$ both of which are reflexive operator algebras.

The Fourier binest is the subspace lattice
 \begin{align*}
 \L_{FB}=\N_v\cup\N_a
 \end{align*}
 and the Fourier binest algebra $\A_{FB}$ is the non-selfadjoint algebra $Alg \L_{FB}$ of operators which leave invariant each subspace of $\L_{FB}$. It is elementary to check that $\A_{FB}$ is a reflexive algebra, being the intersection of two reflexive algebras. Also, since the spaces $e^{i\beta x}H^2(\bR)$ and 
$L^2(\gamma,\infty)$ have trivial intersections 
it is elementary to see that $\A_{FB}$ contains no non-zero finite rank operators and is an antisymmetric operator algebra. 
 
The parabolic algebra $\A_p$ is defined 
as the weak operator topology closed operator algebra 
on $L^2(\bR)$ that is generated by the two strong operator topology continuous unitary semigroups $\{M_\lambda, \lambda\geq 0\}$, $\{D_\mu,\,\mu\geq 0\}$. 
Since the generators of $\A_p$ leave the subspaces of the binest
$\L_{FB}$ invariant, we have 
 $\A_p\subseteq\A_{FB}$. Katavolos and Power showed in \cite{kat-pow-1} that these two algebras are equal and we next give the proof of this from Levene \cite{lev}.
 
Write $\C_2$ for the ideal of Hilbert-Schmidt operators on $L^2(\bR)$ and let $Intk$ denote the Hilbert-Schmidt integral operator given by 
\begin{align*}
(Intk\,f)(x)=\int_\bR k(x,y)f(y)dy
\end{align*} 
where $k\in L^2(\mathbb{R}^2)$. Also let $\Theta_p$ be the unitary operation on the space of kernel functions $k(x,y)$ given by
$\Theta_p(k)(x,t)= k(x,x-t)$. Since a Hilbert-Schmidt operator in $\A_p$ lies in both
the nest algebras $Alg \N_v$ and $Alg \N_a$ and in this sense is doubly upper triangular, it is straightforward to verify the following inclusion.
 
\begin{prop}\label{p:thetamap}The subspace of Hilbert-Schmidt operators in the Fourier binest algebra satisfies the inclusion
\begin{align*}
\A_{FB}\cap \C_2\subseteq\{Intk\,|\,\Theta_p(k)\in H^2(\mathbb{R})\otimes L^2(\mathbb{R}_+)\}
\end{align*}
\end{prop}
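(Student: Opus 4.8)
The plan is to use that the Fourier binest is the union $\L_{FB}=\N_v\cup\N_a$, so that $\A_{FB}=Alg\,\N_v\cap Alg\,\N_a$; consequently an operator $Int\,k$ in $\A_{FB}\cap\C_2$ is simultaneously Hilbert--Schmidt and contained in each of the two nest algebras. I would therefore reformulate membership in $Alg\,\N_v$ and in $Alg\,\N_a$ as almost-everywhere conditions on the kernel $k$, and then read both conditions off after applying the shear unitary $\Theta_p$. The target space $H^2(\bR)\otimes L^2(\bR_+)$ will emerge as the intersection of the two resulting subspaces of $L^2(\bR^2)$. Since the proposition asserts only an inclusion, I need just the necessity direction of each kernel characterization.

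First, the Volterra step. For the continuous nest $\N_v$ of subspaces $L^2([\lambda,\infty))$, testing the invariance $Int\,k\,(L^2([\lambda,\infty)))\subseteq L^2([\lambda,\infty))$ against functions supported in $[\lambda,\infty)$ and letting $\lambda$ vary shows in the standard way that $Int\,k\in Alg\,\N_v$ forces $k(x,y)=0$ for almost every $(x,y)$ with $x<y$. Substituting $y=x-t$, this says exactly that $\Theta_p(k)(x,t)=k(x,x-t)$ vanishes for $t<0$, i.e.
\begin{align*}
\Theta_p(k)\in L^2(\bR)\otimes L^2(\bR_+).
\end{align*}

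Second, the analytic step, which I expect to be the crux. Since $\N_a=F^\ast\N_v$ we have $Alg\,\N_a=F^\ast(Alg\,\N_v)F$, so $Int\,k\in Alg\,\N_a$ if and only if the kernel $\tilde k$ of $F(Int\,k)F^\ast$ vanishes below the diagonal. The key computation is to express $\tilde k$ in terms of $g=\Theta_p(k)$: carrying out the conjugation and then the change of variables $t=u-v$ yields, with the paper's Fourier convention, the identity $\tilde k(x,w)=(F_{12}g)(x-w,w)$, where $F_{12}$ denotes the two-variable Fourier transform. Hence the triangularity $\tilde k(x,w)=0$ for $x<w$ becomes $(F_{12}g)(\xi,w)=0$ for all $\xi<0$; since the Fourier transform in the second variable is injective, this is equivalent to $(F_1 g)(\xi,t)=0$ for $\xi<0$ and almost every $t$. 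By the Paley--Wiener identification $FH^2(\bR)=L^2(\bR_+)$ this says precisely that each slice $g(\cdot,t)$ lies in $H^2(\bR)$, that is
\begin{align*}
\Theta_p(k)\in H^2(\bR)\otimes L^2(\bR).
\end{align*}

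Finally I would intersect the two conditions. Using that tensor products of closed subspaces satisfy $(M_1\otimes L^2(\bR))\cap(L^2(\bR)\otimes M_2)=M_1\otimes M_2$, the Volterra condition (support in $t\ge 0$) and the analytic condition (slicewise membership in $H^2(\bR)$) combine to give $\Theta_p(k)\in H^2(\bR)\otimes L^2(\bR_+)$, as required. The only genuinely computational point is the identity $\tilde k(x,w)=(F_{12}g)(x-w,w)$ in the analytic step; once one sees that the shear $\Theta_p$ decouples the two triangularity constraints into a support condition in $t$ and a Hardy-space condition in $x$, the inclusion follows at once.
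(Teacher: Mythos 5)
Your proof is correct and takes essentially the same approach as the paper: the paper's proof of Proposition \ref{p:thetamap} is simply the observation that a Hilbert--Schmidt operator in $\A_{FB}$ lies in both $Alg\,\N_v$ and $Alg\,\N_a$ and is thus ``doubly upper triangular,'' from which the inclusion is ``straightforward to verify.'' Your write-up is precisely that verification carried out in detail --- the Volterra triangularity giving the support condition $\Theta_p(k)\in L^2(\bR)\otimes L^2(\bR_+)$, the analytic triangularity (via conjugation by $F$ and the identity $k_F(x,w)=(F_2\Theta_p(k))(x-w,w)$, which is correct) giving $\Theta_p(k)\in H^2(\bR)\otimes L^2(\bR)$ by Paley--Wiener, and the intersection of the two tensor-product subspaces, via their commuting projections, yielding $H^2(\bR)\otimes L^2(\bR_+)$.
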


For $h\in H^2(\bR)$ and $ \phi\in L^1(\bR)\cap L^2(\bR_+)$ let $h\otimes \phi$ denote the function $(x,y)\mapsto h(x)\phi(y)$. 
Then the integral operator
$Int k$ induced by the function $k=\Theta_p^{-1}(h\otimes\phi)$
is equal to $M_h\Delta_\phi$ where $\Delta_\phi$ is the bounded operator defined by the sesquilinear form
\begin{align*}
\langle\Delta_\phi f,g\rangle=\int_\bR\int_\bR \phi(t)D_tf(x)\overline{g(x)}dxdt, \textrm{ where }f,g\in L^2(\bR).
\end{align*}
Noting that $\Delta_\phi$ lies in the weak operator topology closed algebra generated by $\{D_t, t\geq 0\}$ it follows that the  integral operator $Int k$ actually lies in the smaller  algebra $\A_{p}$.
Since the linear span of such functions $k$ with separable variables is dense in the Hilbert space $H^2(\bR)\otimes L^2(\bR_+)$  it follows from the proposition that
\begin{align*}
\A_{FB}\cap \C_2\subseteq\{Intk\,|\,\Theta_p(k)\in H^2(\mathbb{R})\otimes L^2(\mathbb{R}_+)\}\subseteq \A_p\cap\C_2\subseteq \A_{FB}\cap \C_2
\end{align*}
and so these spaces coincide.

Choose sequences  $h_n, \phi_n, n=1,2,\dots ,$ of such functions so that the operators $M_{h_n}$ and $\Delta_{\phi_n}$ are bounded in operator norm and converge to the identity in the strong operator topology. This leads to the following proposition.

\begin{prop}\label{p:bai} $\A_{p}\cap \C_2$
contains a bounded approximate identity - that is, a sequence that is bounded in operator norm and converges in the strong operator topology to the identity.
\end{prop}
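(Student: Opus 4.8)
The plan is to make the remark preceding the statement effective by writing down explicit sequences $h_n \in H^2(\bR)$ and $\phi_n \in L^1(\bR)\cap L^2(\bR_+)$ for which $M_{h_n}$ and $\Delta_{\phi_n}$ are contractions converging to the identity in the strong operator topology, and then to set $E_n = M_{h_n}\Delta_{\phi_n}$. Since $h_n\otimes\phi_n\in H^2(\bR)\otimes L^2(\bR_+)$, the identification established just before the proposition shows that each $E_n = Int(\Theta_p^{-1}(h_n\otimes\phi_n))$ lies in $\A_p\cap\C_2$, and $\|E_n\|\le\|M_{h_n}\|\,\|\Delta_{\phi_n}\|\le 1$, so it only remains to verify the two strong limits and then combine them.

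For the multiplication factor I would take $h_n(x)=in/(x+in)$. The only singularity of $z\mapsto in/(z+in)$ is the pole at $z=-in$ in the lower half plane, so $h_n$ is the boundary value of a bounded analytic function on the upper half plane; since $\int_\bR |h_n(x+iy)|^2\,dx \le \int_\bR n^2(x^2+n^2)^{-1}\,dx = n\pi$ uniformly in $y>0$, we get $h_n\in H^2(\bR)$. Moreover $|h_n(x)|=n(x^2+n^2)^{-1/2}\le 1$ with $h_n(x)\to 1$ for every $x$, so $M_{h_n}$ is a contraction, and dominated convergence applied to $\|M_{h_n}f-f\|^2=\int_\bR|h_n-1|^2|f|^2$ gives $M_{h_n}\to I$ strongly.

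For the other factor I would take $\phi_n(t)=n e^{-nt}$ for $t\ge 0$ and $\phi_n(t)=0$ for $t<0$, so that $\phi_n\in L^1(\bR)\cap L^2(\bR_+)$ with $\|\phi_n\|_1=1$. Conjugating by the Fourier--Plancherel transform $F$ turns the translations $D_t$ into the multipliers $e^{-itx}$, whence $F\Delta_{\phi_n}F^\ast$ is multiplication by $\widehat{\phi_n}(x)=\int_0^\infty n e^{-nt}e^{-itx}\,dt = n/(n+ix)$. This symbol again satisfies $|\widehat{\phi_n}(x)|\le 1$ and $\widehat{\phi_n}(x)\to 1$ pointwise, so $\Delta_{\phi_n}$ is a contraction converging strongly to the identity. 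Finally, as $\|M_{h_n}\|\le 1$ uniformly, the product estimate $\|E_nf-f\|\le\|\Delta_{\phi_n}f-f\|+\|M_{h_n}f-f\|$ forces $E_n\to I$ strongly, which completes the argument.

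The construction is routine; the one point requiring care, and the reason the naive choice fails, is that the constant symbol $1$ lies in neither $H^2(\bR)$ nor the Fourier image of $L^1(\bR)\cap L^2(\bR_+)$, reflecting the one-sided (upper-triangular) constraints encoded in $\A_p$. The resolvent functions $in/(x+in)$ and the one-sided exponential kernels $n e^{-nt}$ are chosen precisely to respect these constraints while remaining uniformly bounded as multipliers and tending pointwise to $1$: uniform boundedness is what yields an operator-norm bounded sequence, and the pointwise limits are what yield strong convergence to $I$.
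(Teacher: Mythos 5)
Your proposal is correct and follows exactly the route the paper takes: the paper's proof simply asserts that sequences $h_n\in H^2(\bR)$ and $\phi_n\in L^1(\bR)\cap L^2(\bR_+)$ can be chosen with $M_{h_n}$ and $\Delta_{\phi_n}$ norm-bounded and converging strongly to the identity, and you supply explicit such choices (the resolvents $in/(x+in)$ and the one-sided exponentials $ne^{-nt}$) together with the routine dominated-convergence and product estimates. Your verification that these choices respect the one-sided constraints, including the symbol computation $F\Delta_{\phi_n}F^\ast = M_{n/(n+ix)}$, is accurate, so this is the paper's argument made effective.
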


Combining this fact with the identification $\A_p\cap \C_2 = \A_{FB} \cap \C_2$ we obtain the following theorem.

\begin{thm}\label{t:binestalgebra} The parabolic algebra $\A_p$ 
is equal to the Fourier binest algebra $\A_{FB}$.
\end{thm}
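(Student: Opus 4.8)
The plan is to prove $\A_p = \A_{FB}$ by a standard density argument, leveraging the two facts already established in the excerpt: the identification $\A_p \cap \C_2 = \A_{FB} \cap \C_2$ and the existence of a bounded approximate identity inside $\A_p \cap \C_2$ (Proposition~\ref{p:bai}). Since the generators $M_\lambda, D_\mu$ for $\lambda,\mu \geq 0$ all leave the subspaces of $\L_{FB}$ invariant, the inclusion $\A_p \subseteq \A_{FB}$ is immediate, so the entire content lies in the reverse inclusion $\A_{FB} \subseteq \A_p$.

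For the reverse inclusion, the approach is as follows. Let $T \in \A_{FB}$ be arbitrary, and let $(E_n)$ denote the bounded approximate identity from Proposition~\ref{p:bai}, so that each $E_n \in \A_p \cap \C_2$, the operators $E_n$ are uniformly bounded in operator norm, and $E_n \to I$ in the strong operator topology. First I would consider the products $E_n T$. Since $\A_{FB}$ is an algebra containing both $E_n$ (as $\A_p \cap \C_2 \subseteq \A_p \subseteq \A_{FB}$) and $T$, each product $E_n T$ lies in $\A_{FB}$. The crucial point is that $E_n T$ is also Hilbert--Schmidt: $\C_2$ is a two-sided ideal in $B(L^2(\bR))$, so $E_n \in \C_2$ forces $E_n T \in \C_2$. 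Hence $E_n T \in \A_{FB} \cap \C_2 = \A_p \cap \C_2 \subseteq \A_p$ for every $n$.

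It then remains to show $E_n T \to T$ in the weak operator topology, for then $T$ lies in the weak operator topology closure of $\A_p$, which is $\A_p$ itself by definition. This convergence follows from a routine estimate: for $f, g \in L^2(\bR)$,
\[
\langle E_n T f, g\rangle = \langle T f, E_n^* g\rangle,
\]
and since the uniform bound on $\|E_n\|$ together with $E_n \to I$ strongly yields $E_n^* g \to g$ weakly (indeed, strong convergence of a uniformly bounded net together with its use here suffices to pass the limit through the bounded operator $T$), we obtain $\langle E_n T f, g\rangle \to \langle T f, g\rangle$. This establishes $E_n T \to T$ weakly, completing the argument.

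I expect the only genuinely delicate point to be the justification that $E_n^* g \to g$ appropriately; one must take care that a bounded approximate identity converging strongly to $I$ need not have its adjoints converging strongly, but weak convergence of the adjoints (or strong convergence on the relevant side) is all that is required here, and the uniform operator-norm bound makes the interchange of limits legitimate. Everything else is formal, resting squarely on the ideal property of $\C_2$ and the two previously established propositions. In particular, no further hard analysis—no appeal to Paley--Wiener or the F.\ and M.\ Riesz theorem—is needed at this final stage, since all the analytic work has been absorbed into the identification of the Hilbert--Schmidt parts of the two algebras.
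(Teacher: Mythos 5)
Your proposal is correct and is essentially the paper's own argument: the theorem is obtained there precisely by combining the identification $\A_p\cap\C_2=\A_{FB}\cap\C_2$ with the bounded approximate identity of Proposition~\ref{p:bai}, exactly as you do. The one point you flag as delicate is in fact immediate: since $E_n\to I$ strongly, $E_nTf\to Tf$ in norm for each $f$ (apply $E_n$ to the fixed vector $Tf$), so no consideration of the adjoints $E_n^*$ is needed at all.
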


We now describe $Lat \A_p$ from which it follows in particular that the binest $\N_a\cup\N_v$ is not a reflexive subspace lattice. See Fig. \ref{Latpar}.
 
Let
$K_{\lambda,s}=M_\lambda M_{\phi_s}H^2(\bR)$ where $\phi_s(x)=e^{-isx^2/2}$. This is evidently an invariant subspace for the multiplication semigroup and for $s\geq 0$ one can check that it is invariant for the translation semigroup. Thus
for $s\geq 0$ the nest 
$\N_s=M_{\phi_s}\N_a$ is contained in $Lat \A_p$ and these nests are distinct. In fact any two nontrivial subspaces from nests with distinct $s$ parameter have trivial intersection. With the strong operator topology for the associated subspace projections it can be shown that the set of these nests for $s\geq 0$, together with the Volterra nest
$\N_v$, is homeomorphic to the closed unit disc,
as indicated in Figure \ref{f:Latpar}.
A cocycle argument given in   \cite{kat-pow-1} leads to the fact that every invariant subspace for $\A_p$ is one of these subspaces. Thus we have

\begin{equation}\label{LatAp}
Lat\A_p=\{K_{\lambda,s}|\lambda\in\bR,s\geq 0\}\cup \N_v
\end{equation}

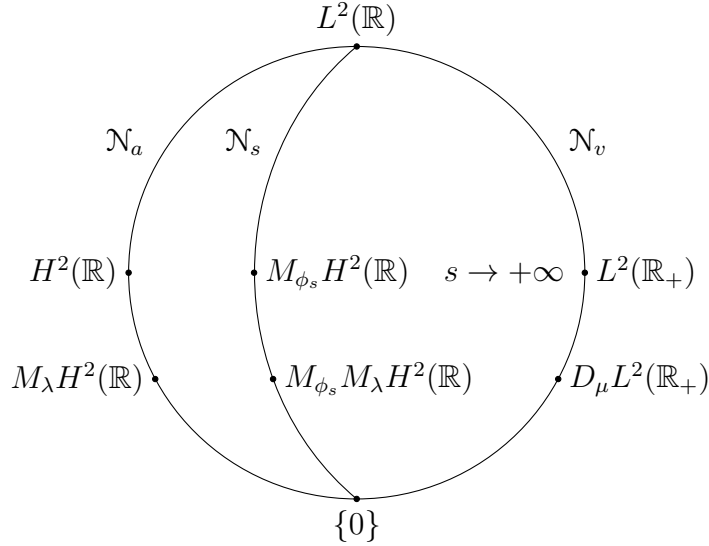
\begin{figure}[h!]
\begin{center}
\begin{tikzpicture}
\filldraw [black] (3,6) circle (1pt);
\draw (3,6) node [above]  {$L^2(\bR)$};
\filldraw [black] (3,0) circle (1pt);
\draw (3,0) node [below] {$\{0\}$};
\filldraw [black] (0,3) circle (1pt);
\draw (0,3) node [left]  {$H^2(\bR)$};
\filldraw [black] (6,3) circle (1pt);
\draw (6,3) node [right]  {$L^2(\bR_+)$};
\filldraw [black] (1.65,3) circle (1pt);
\draw (1.65,3) node [right] {$M_{\phi_s}H^2(\bR)$};
\filldraw [black] (0.35,1.59) circle (1pt);
\draw (4,3) node [right] {$s\rightarrow +\infty$};
\draw (0.35,1.59) node [left]  {$M_{\lambda}H^2(\bR)$};
\filldraw [black] (5.65,1.59) circle (1pt);
\draw (5.65,1.59) node [right]  {$D_\mu L^2(\bR_+)$};
\filldraw [black] (1.9,1.59) circle (1pt);
\draw (1.9,1.59) node [right]  {$M_{\phi_s}M_\lambda H^2(\bR)$};
\draw (0.35,4.41) node [above left] {$\N_a$};
\draw (1.9,4.41) node [above left] {$\N_s$};
\draw (5.65,4.41) node [above right] {$\N_v$};
\draw (3,3) circle [radius=3.0];
\draw (3,0) to [out=140,in=220] (3,6);
\end{tikzpicture}
\end{center}
\caption{Parametrising $Lat \A_p$ by the unit disc.}
\label{f:Latpar}
\end{figure}

\section{Antisymmetry}
We now show that $\A_{ph}$, like its subalgebras $\A_{p}$ and $\A_{h}$,
is an antisymmetric operator algebra. 
In fact we shall prove that the containing algebra  $Alg \L$ is antisymmetric. A key step of the proof is the next lemma which will also be useful in the analysis of unitary automorphisms.
We write $\bC^+$ for the set of complex numbers with positive imaginary part. 
\begin{lemma}\label{1st lem}
Let $h,g\in H^2(\bR)$, $c,d\in\bC^+$ and let
$(x+c)h(x)=(x+d)g(x)$ for almost every $x$ in a Borel set 
 $A$  of  positive Lebesgue measure. Then 
$(x+c)h(x)=(x+d)g(x)$ almost everywhere in $\bR$.
\end{lemma}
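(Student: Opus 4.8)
The plan is to reduce the statement to the uniqueness theorem for the Hardy space, namely the fact that a nonzero function in $H^2(\bR)$ cannot vanish on a subset of $\bR$ of positive Lebesgue measure (a consequence of the F. and M. Riesz theorem, since $\log|f|$ is Poisson integrable for a nonzero $f\in H^2(\bR)$). Set $\psi(x)=(x+c)h(x)-(x+d)g(x)$, so that the hypothesis says $\psi=0$ a.e.\ on $A$ and the desired conclusion is that $\psi=0$ a.e.\ on $\bR$. The difficulty is that $\psi$ itself need not lie in $H^2(\bR)$, since multiplication by the linear factors $x+c$ and $x+d$ destroys square-integrability; so I cannot apply the uniqueness theorem to $\psi$ directly.

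First I would correct the growth by dividing by $x+i$. Since $c,d\in\bC^+$, the zeros $-c$ and $-d$ of the numerators lie in the lower half plane, as does the zero $-i$ of the denominator, so the rational functions $\frac{x+c}{x+i}$ and $\frac{x+d}{x+i}$ extend to bounded analytic functions on $\bC^+$; that is, they belong to $H^\infty(\bR)$. Consequently
\[
\phi(x):=\frac{\psi(x)}{x+i}=\frac{x+c}{x+i}\,h(x)-\frac{x+d}{x+i}\,g(x)
\]
is a difference of products of $H^\infty(\bR)$ multipliers with $H^2(\bR)$ functions, and hence $\phi\in H^2(\bR)$.

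Since $x+i$ is nonvanishing on $\bR$, the identity $\psi=0$ a.e.\ on $A$ transfers to $\phi=0$ a.e.\ on $A$, so the $H^2(\bR)$ function $\phi$ vanishes on a set of positive measure. By the uniqueness theorem quoted above, $\phi\equiv 0$, and multiplying back by $x+i$ gives $\psi\equiv 0$, i.e.\ $(x+c)h(x)=(x+d)g(x)$ almost everywhere. The only point requiring care, and the real content of the argument, is the verification that $\frac{x+c}{x+i}$ and $\frac{x+d}{x+i}$ are genuinely analytic and bounded on the upper half plane, which is precisely where the hypothesis $c,d\in\bC^+$ is used; everything else is routine.
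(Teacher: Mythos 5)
Your proof is correct and follows essentially the same route as the paper's: both arguments reduce the statement to the uniqueness theorem for $H^2(\bR)$ (a nonzero Hardy-space function cannot vanish on a set of positive Lebesgue measure) by turning $(x+c)h(x)-(x+d)g(x)$ into an $H^2(\bR)$ function through multiplication by suitable $H^\infty(\bR)$ rational factors, using $c,d\in\bC^+$ to place the poles in the lower half plane. The only difference is cosmetic: you divide by the fixed nonvanishing factor $x+i$, whereas the paper factors out $(x+c)$ and absorbs $\frac{1}{x+c}$ as an $H^\infty(\bR)$ multiplier; your symmetric normalization is, if anything, slightly cleaner.
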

\begin{proof}
We have
\begin{align*}
(x+c)h(x)=(x+d)g(x)&\Leftrightarrow x(h(x)-g(x))+c(h(x)-g(x))+(c-d)g(x)=0\\
&\Leftrightarrow (x+c) (h(x)-g(x))+(x+c)\frac{(c-d)g(x)}{x+c}=0\\ &\Leftrightarrow 
(x+c)\left(h(x)-g(x)+\frac{(c-d)g(x)}{x+c}\right)=0.
\end{align*}
Since $\frac{1}{x+c}\in H^\infty(\bR)$ we have  $h(x)-g(x)+\frac{(c-d)g(x)}{x+c}\in H^2(\bR)$ and so it suffices to prove the following.
Given $h\in H^2(\bR)$ and $ c\in\bC^+$, with $(x+c)h(x)=0$ almost everywhere in $A$, then  $(x+c)h(x)=0$ almost everywhere.  This is evident from basic properties of functions in  $H^2(\bR)$.
\end{proof}

In the next proof we write $D_g$ for the operator $FM_gF^*$ with $g \in H^\infty(\bR)$. This lies in the weak operator topology closed algebra generated by the operators $D_\mu = FM_\mu F^*$, for $\mu \geq 0$, and so belongs to $\A_p$ and to $Alg \L$.

\begin{thm}
The selfadjoint elements of $Alg \L$ are real multiples of the identity.
\end{thm}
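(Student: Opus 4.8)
The plan is to show that a self-adjoint $A\in Alg\,\L$ must be a real multiple of the identity; since $\A_{ph}\subseteq Alg\,\L$, this yields antisymmetry for both algebras. First I would exploit that self-adjointness turns invariance into reduction. Because $A$ leaves each $L^2(-\alpha,\infty)$ invariant, it commutes with the projections $M_{\chi_{(-\alpha,\infty)}}$, $\alpha\ge 0$; these generate the full multiplication masa on $(-\infty,0)$ together with the projection onto $L^2(0,\infty)$. Hence $A$ is block-diagonal for $L^2(\bR)=L^2(-\infty,0)\oplus L^2(0,\infty)$ and acts as multiplication by some real $\psi\in L^\infty(-\infty,0)$ on the first summand. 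Dually, since $A$ reduces each $e^{i\beta x}H^2(\bR)$, $\beta\ge 0$, conjugation by $F$ shows $FAF^*$ commutes with the masa on $(0,\infty)$; equivalently $A$ reduces $H^2(\bR)$ and acts there as a Fourier (convolution) multiplier, so $A|_{H^2(\bR)}$ commutes with every translation $D_s$, $s\in\bR$.

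The heart of the argument is to glue these two one-sided multiplication pictures. From the block structure I obtain the pointwise identity $\chi_{(-\infty,0)}Ah=\psi\,\chi_{(-\infty,0)}h$ for every $h\in H^2(\bR)$. I would feed in the reproducing kernels $k_c(x)=1/(x+c)$, $c\in\bC^+$, which lie in $H^2(\bR)$ and satisfy $Ak_c\in H^2(\bR)$; the identity then reads $(x+c)(Ak_c)(x)=\psi(x)$ for a.e.\ $x<0$. For two parameters $c,d\in\bC^+$ the functions $Ak_c,Ak_d\in H^2(\bR)$ satisfy $(x+c)(Ak_c)(x)=(x+d)(Ak_d)(x)$ on the positive-measure set $(-\infty,0)$, so Lemma \ref{1st lem} forces this equality a.e.\ on $\bR$. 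This is the step I expect to be the main obstacle, and it is precisely what Lemma \ref{1st lem} is designed to overcome: the Volterra half-nest only controls $A$ on $(-\infty,0)$, and the lemma propagates that control to all of $\bR$. Its payoff is a single symbol $\Psi$, independent of $c$, with $Ak_c=\Psi\,k_c$ everywhere.

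To finish I would use translation invariance. Since $D_sk_c=k_{c-s}$ with $c-s\in\bC^+$, intertwining $A|_{H^2(\bR)}$ with $D_s$ gives $\Psi(x-s)=\Psi(x)$ for a.e.\ $x$ and every $s$, so $\Psi$ is a constant $\kappa$, necessarily real because $A$ is self-adjoint. Then $A=\kappa I$ on $H^2(\bR)$, while $\psi=\Psi|_{(-\infty,0)}=\kappa$ gives $A=\kappa I$ on $L^2(-\infty,0)$ as well. As $\overline{H^2(\bR)}\cap L^2(0,\infty)=\{0\}$ by the triviality of Hardy/support intersections used above, the algebraic sum $H^2(\bR)+L^2(-\infty,0)$ is dense, and continuity yields $A=\kappa I$ on all of $L^2(\bR)$. (Alternatively, a Poisson-integral estimate on $\|\Psi k_c\|\le\|A\|\,\|k_c\|$ first places $\Psi$ in $H^\infty(\bR)$, after which a multiplication operator that is also a Fourier multiplier must be constant; but the translation argument reaches constancy directly.)
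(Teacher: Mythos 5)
Your proof is correct and follows essentially the same route as the paper's: the same two reductions (multiplication operator on $L^2(\bR_-)$ from the Volterra half-nest, Fourier multiplier on $H^2(\bR)$ from the analytic half-nest), the same kernels $k_c(x)=1/(x+c)$ fed into Lemma \ref{1st lem} to produce a single extending symbol, and the same translation-invariance argument to force that symbol to be constant. The only differences are cosmetic: you apply the commutation with $D_s$ directly to the kernels rather than first passing to $A|_{H^2(\bR)}=M_\phi|_{H^2(\bR)}=D_g|_{H^2(\bR)}$ by density, and you spell out the final density of $H^2(\bR)+L^2(\bR_-)$, which the paper leaves implicit.
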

\begin{proof}
Let $A\in Alg \L\cap (Alg \L)^*$.  Then $A$ is reduced by subspaces $L^2(-\mu, +\infty)$, for $\mu\geq 0$, and $M_\lambda H^2(\bR)$, for $\lambda\geq 0$. It follows
that $A$ admits two direct sum decompositions
\begin{align*}
 A=P_{L^2(\bR_-)}M_fP_{L^2(\bR_-)}+P_{L^2(\bR_+)}XP_{L^2(\bR_+)}=P_{H^2(\bR)}D_gP_{H^2(\bR)}+
P_{\overline{H^2(\bR)}}YP_{\overline{H^2(\bR)}},
\end{align*}
where $f\in L^\infty(\bR_-)$ and $ g\in H^\infty(\bR)$. 
Let $h(x)=\frac{1}{x+c}$ with $c\in \bC^+$. Then, by the first decomposition,
\begin{align*}
Ah &=M_fh+P_{L^2(\bR_+)}XP_{L^2(\bR_+)}h,\\
h^{-1}Ah &=f+h^{-1}P_{L^2(\bR_+)}XP_{L^2(\bR_+)}h
\end{align*}
and so for $x$ in $\bR_-$ we have $h^{-1}(x)(Ah)(x) =f(x)$. Also $Ah$ is in $H^2(\bR)$ and so by the previous lemma, $h^{-1}Ah$ is determined by $f$ and there is a function $\phi$ independent of $c$ which extends $f$. Thus $h^{-1}Ah=\phi$ and
$Ah=\phi h.$
Since the linear span of the family $\{h:\bR\rightarrow \bC\big|h(x)=\frac{1}{x+c},\,c\in\bC^+\}$ is dense in $H^2(\bR)$, we have $A\big|_{H^2(\bR)}=M_\phi\big|_{H^2(\bR)}$. However, by the second decomposition $A\big|_{H^2(\bR)}=D_g\big|_{H^2(\bR)}$. Thus, given $h_1\in H^2(\bR)\backslash\{0\}$, we have for every $\mu\in \bR$,
\begin{align*}
M_\phi D_\mu h_1=D_gD_\mu h_1=D_\mu D_g h_1 = D_\mu M_\phi h_1.
\end{align*}
Thus 
$\phi(x)h_1(x-\mu)=\phi(x-\mu)h_1(x-\mu)$ for almost every $x\in\bR$ and so $\phi(x)=c$ almost everywhere for some $c\in\bC$. Now we have $A\big|_{H^2(\bR)}= A\big|_{L^2(\bR_-)}= cI$ and it follows that $A=cI$, as required.
\end{proof}

\section{Finite rank operators in $Alg \L$}

It follows immediately from the definition of the binest $\L$ that 
the weak operator topology closed space
$$\I = P_+B(L^2(\bR))(I-Q_+)
$$ 
is contained in  $Alg\L$,
where $P_+$ and $Q_+$ are the orthogonal projections for
$L^2(\bR_+)$ and $H^2(\bR)$. 
From this and  Lemma \ref{key1}
it follows that, in contrast to the subalgebras $\A_p$ and $\A_h$, the algebra  $\A_{ph}$ contains finite rank operators.
Also,  it is straightforward to construct a pair of nonzero  operators in $\I$ whose product is zero, and so, unlike the semigroup algebra $H^\infty(\bR)$, it follows also that the triple semigroup algebra $\A_{ph}$ is not an integral domain. 

We now show that in fact the space $\I$ contains all the finite rank operators in $Alg \L$. Let
$\N_v^-$ and $\N_a^+$ be the subnests of $\N_v$ and $\N_a$ whose union is $\L$.

\begin{prop}\label{p:finiterank}
The weak operator topology closed ideal generated by the finite rank operators in $Alg\L$ is the space $\I$.
Moreover, each operator of rank $n$ is decomposable as a sum of $n$ rank one operators in $Alg\L$.
\end{prop}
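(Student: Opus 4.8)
The plan is to reduce everything to two facts: that $\I$ is a weak operator topology closed two-sided ideal of $Alg\L$, and that every finite-rank operator of $Alg\L$ already lies in $\I$. For the first, both $L^2(\bR_+)$ and $H^2(\bR)$ belong to $\L$, so every $S\in Alg\L$ satisfies $SL^2(\bR_+)\subseteq L^2(\bR_+)$ and $SH^2(\bR)\subseteq H^2(\bR)$; writing $T=P_+X(I-Q_+)\in\I$ this gives at once $ST,TS\in\I$, and $\I$ is weakly closed by hypothesis. Granting the second fact, the weakly closed ideal generated by the finite-rank operators is contained in $\I$; conversely the finite-rank operators of $\I=P_+B(L^2(\bR))(I-Q_+)$ are weakly dense in $\I$ and lie in $Alg\L$, so the generated ideal is exactly $\I$. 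Thus the whole statement rests on the claim \emph{(a): every finite-rank $T\in Alg\L$ satisfies $T\in\I$, i.e. $\ran T\subseteq L^2(\bR_+)$ and $T|_{H^2(\bR)}=0$.} Granting (a), the \emph{moreover}-clause is immediate: if $T$ has rank $n$ then $\ran T\subseteq L^2(\bR_+)$ and $(\ker T)^{\perp}\subseteq\overline{H^2(\bR)}$ (since $H^2(\bR)\subseteq\ker T$), so a singular-value decomposition $T=\sum_{i=1}^n\langle\,\cdot\,,v_i\rangle u_i$ has each $u_i\in L^2(\bR_+)$ and each $v_i\in\overline{H^2(\bR)}$, whence each summand lies in $\I\subseteq Alg\L$.

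The base of the argument is the rank-one case. For a nonzero $R\colon f\mapsto\langle f,\eta\rangle\xi$, invariance under a subspace $M$ means $\xi\in M$ or $\eta\perp M$. Testing against the chain $\N_a^+=\{e^{i\beta x}H^2(\bR):\beta\geq0\}$, whose top is $H^2(\bR)$ and whose intersection is $\{0\}$, and against $\N_v^-=\{L^2(-\alpha,\infty):\alpha\geq0\}$, whose bottom is $L^2(\bR_+)$, the only configuration not excluded is $\xi\in L^2(\bR_+)$ and $\eta\in\overline{H^2(\bR)}$. The exclusions use two uniqueness facts for Hardy boundary values: a nonzero function in $H^2(\bR)$ (resp. $\overline{H^2(\bR)}$) cannot vanish on a set of positive measure, so $L^2(\bR_+)\cap H^2(\bR)=\{0\}$; and, after a modulation, a function whose physical and whose Fourier support both lie in left half-lines must vanish. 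These kill the degenerate cases: $\xi\notin H^2(\bR)$ forces $\eta\in\overline{H^2(\bR)}$, while $\xi\notin L^2(\bR_+)$ forces $\eta$ to have physical support in a left half-line, which together with the analytic constraint forces $\eta=0$. Conversely every such $R$ lies in $\I$, so the rank-one operators of $Alg\L$ are exactly those of $\I$.

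It remains to prove (a) for arbitrary finite rank, which is the heart of the matter. A convenient reformulation: since $(I-P_+)TP_+=0$ and $(I-Q_+)TQ_+=0$ hold automatically (invariance of $L^2(\bR_+)$ and $H^2(\bR)$), membership $T\in\I$ is equivalent to the vanishing of the two compressions $(I-P_+)T(I-P_+)$ on $L^2(\bR_-)$ and $Q_+TQ_+$ on $H^2(\bR)$. Each lives in a \emph{continuous} nest algebra (the Volterra nest on $L^2(\bR_-)$, resp.\ the analytic nest on $H^2(\bR)$), where finite-rank operators need not vanish; the leverage is that $T$ lies in $Alg\N_v^-$ and $Alg\N_a^+$ simultaneously. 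As a first step, since $\ran T$ is finite-dimensional the decreasing chain $\{T(e^{i\beta x}H^2(\bR))\}_{\beta\geq0}$ of subspaces of $\ran T$ has trivial intersection and hence stabilises at $\{0\}$, giving $T(e^{i\beta_0 x}H^2(\bR))=0$ for some $\beta_0\geq0$; dually, applying the same idea to $T^{*}\in Alg\L^{\perp}$ along the chain $\{L^2(-\infty,\gamma):\gamma\leq0\}$ yields $\ran T\subseteq L^2(\gamma_0,\infty)$ for some $\gamma_0\leq0$.

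The main obstacle is to sharpen these one-sided bounds to $\beta_0=0$ and $\gamma_0=0$. My plan is to pass to the Hilbert–Schmidt kernel $k=\sum_i u_i\otimes\overline{v_i}$ of $T$ (a finite-rank operator is Hilbert–Schmidt with separable kernel) and to transcribe the two half-nest conditions into support and analyticity conditions on $k$: invariance under $\N_v^-$ forces $k$ to vanish almost everywhere on $\{x<0,\ x<y\}$, and invariance under $\N_a^+$ gives the Fourier-rotated analogue for the partial transform of $k$. The F.\ and M.\ Riesz theorem, in the two-variable Paley–Wiener form, should then confine $k$ to the quarter space $L^2(\bR_+)\otimes\overline{H^2(\bR)}$, which is precisely $\I\cap\C_2$. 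I expect the decisive difficulty to be this last analyticity step: because the two nests do not commute the argument cannot be carried out nest by nest, and one must combine the physical-support information from the Volterra side with the Fourier-support information from the analytic side, using Hardy uniqueness in each variable to force any component of $k$ lying outside the quarter space to vanish.
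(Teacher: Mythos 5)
Your reduction to claim (a), your rank-one analysis, and the singular-value-decomposition argument for the ``moreover'' clause are all correct, and your stabilisation step (finite-dimensionality of $\ran T$ forces $T(e^{i\beta_0x}H^2(\bR))=0$ for some $\beta_0\ge 0$) is exactly how the paper's proof begins. The genuine gap is the case of rank $n\ge 2$, which you leave as a plan, and the plan as described would fail. You propose to transcribe the two invariance conditions into support/analyticity conditions on the kernel $k$ and on $k_F$, and then to invoke a two-variable Paley--Wiener/F.\ and M.\ Riesz theorem to ``confine $k$ to the quarter space'' $\I\cap\C_2$. That is precisely what such a theorem does not do: by Lemma \ref{l:supportinclusion} together with Lemma \ref{key2} (with $\theta_0=\pi/4$), those conditions confine $k$ only to $\S_{\pi/4}=\overline{(\A_p\cap\C_2)+(\I\cap\C_2)}$, the closed span of \emph{two} quarter-type subspaces, and this sum is neither direct nor orthogonal. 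Every kernel of an operator in $\A_p\cap\C_2$ satisfies all of your support conditions, yet such operators neither annihilate $H^2(\bR)$ nor have range inside $L^2(\bR_+)$; so support data alone can never yield (a). To finish along your lines you would have to re-inject finite-rankness to kill the parabolic component of $k$ inside $\S_{\pi/4}$, and you give no argument for this. (Minor: with the paper's kernel conventions, $\I\cap\C_2$ corresponds to $L^2(\bR_+)\otimes H^2(\bR)$, not $L^2(\bR_+)\otimes\ol{H^2(\bR)}$; see Lemma \ref{key1}.)

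What is missing is in fact elementary, requiring only the tools you already used for rank one. Write $T=\sum_{j=1}^n\langle\cdot,h_j\rangle g_j$ with $\{h_j\}$, $\{g_j\}$ linearly independent. Your stabilisation step gives $h_j\in M_{\beta_0}\ol{H^2(\bR)}$ for all $j$. Such functions are nonzero almost everywhere, so the restrictions $h_j|_{\bR_+}$ remain linearly independent, and hence there exist $f_1,\dots,f_n\in L^2(\bR_+)$ with $\langle f_i,h_j\rangle=\delta_{ij}$. Since $L^2(\bR_+)\in\L$, invariance gives $g_i=Tf_i\in L^2(\bR_+)$ for every $i$. Now invariance of $H^2(\bR)$ gives $T(H^2(\bR))\subseteq H^2(\bR)\cap\mathrm{span}\{g_i\}\subseteq H^2(\bR)\cap L^2(\bR_+)=\{0\}$, so $\langle f,h_j\rangle=0$ for all $f\in H^2(\bR)$, i.e.\ $h_j\in\ol{H^2(\bR)}$ for every $j$. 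Thus $T$ is a sum of $n$ rank-one operators lying in $\I$, which gives claim (a) and the ``moreover'' clause at once; this is the paper's proof, and it needs no two-variable Fourier analysis.
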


\begin{proof}
 Let
\[
Int k : f \to \sum_{j=1}^n  \langle f ,  {h_j}\rangle g_j
\]
be a nonzero finite rank operator in $Alg \L$, with
$\{h_j\}$ and  $\{g_j\}$  linearly independent functions in $L^2(\bR)$.
There is some $\lambda_0\geq 0$, such that 
$M_{\lambda_0}H^2(\bR)\cap span\{g_i\,:\,i=1,\dots,n\}=\{0\}$. 
Since $M_{\lambda_0}H^2(\bR) \in \L$ it follows that if $f\in  M_{\lambda_0}H^2(\bR)$ then 
$\langle h_i,f\rangle=0$, for every $i=1,\dots, n$. This in turn implies that 
$h_i\in M_{\lambda_0}\overline{H^2(\bR)}$.

We see now that the functions $h_i$ have full support and, moreover, their set of restrictions to $\bR_+$ is a linearly independent set of functions. Thus there are functions  $f_1,\dots , f_n$  in $L^2(\bR_+)$ with 
$\langle f_i , {h_j}\rangle = \delta_{ij}$. Since $Int k$ is in $Alg \N_v^-$ it follows that each function $g_i$ lies in $L^2(\bR _+)$.

Since $Int k \in Alg \N_a^+$ it now follows that if
$f \in H^2(\bR)$  then $\langle f ,  h_j\rangle = 0$ for each $j$. This holds for all such $f$ and so  $h_j \in {H^2(\bR)}^\bot$ for each $j$. Since $\I \subseteq Alg \L$ the rank one operators determined by the $h_j$ and $g_j$ lie in $Alg \L$  and the second assertion of the proposition follows. The first assertion follows from this. 
\end{proof}

As we will see in the next section, the ideal $\I$ plays a key role in the proof of reflexivity of the triple semigroup algebra.

\section{Reflexivity}
We now show that the algebra $\A_{ph}$ is reflexive, that is  
$\A_{ph}=AlgLat\A_{ph}$, and for this it will be sufficient to show that
$\A_{ph}$ is the binest algebra $Alg\L$.  Figure \ref{f:binestL}  depicts the inclusion of $Lat A_{ph}$ in $Lat A_p$ implied by the following  lemma.
\begin{lemma}\label{l:LatAph}
$Lat \A_{ph}= \L$
\end{lemma}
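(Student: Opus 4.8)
The plan is to prove the two inclusions separately, exploiting the already-established description \eqref{LatAp} of $Lat\,\A_p$. The inclusion $\L \subseteq Lat\,\A_{ph}$ is the routine direction: since invariance for the whole algebra is equivalent to invariance for the generating semigroups, I would simply verify that each subspace of $\L$ is fixed by $M_\lambda$, $D_\mu$ and $V_t$ in the relevant parameter ranges. For $L^2(-\alpha,\infty)$ with $\alpha\geq 0$ this is a support computation: $V_t$ sends it to $L^2(-\alpha e^{-t},\infty)$, and for $t\geq 0$ one has $-\alpha e^{-t}\geq -\alpha$, while $M_\lambda$ preserves supports and $D_\mu$ with $\mu\geq 0$ shifts them to the right. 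For $e^{i\beta x}H^2(\bR)$ with $\beta\geq 0$ one uses that $e^{i\gamma x}$ is inner for $\gamma\geq 0$, together with the identities $V_t(e^{i\beta x}H^2(\bR))=e^{i\beta e^t x}H^2(\bR)$ and $D_\mu(e^{i\beta x}H^2(\bR))=e^{i\beta x}H^2(\bR)$, noting $\beta e^t\geq \beta\geq 0$ when $t\geq 0$.

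For the reverse inclusion $Lat\,\A_{ph}\subseteq \L$, the key observation is that $\A_p\subseteq\A_{ph}$ forces $Lat\,\A_{ph}\subseteq Lat\,\A_p$, so by \eqref{LatAp} every member of $Lat\,\A_{ph}$ is either some $K_{\lambda,s}$ or a subspace of the Volterra nest $\N_v$, and it remains only to impose invariance under the dilation semigroup $V_t$, $t\geq 0$. The crucial computation is the action of $V_t$ on the disc parameters: a direct change of variables gives, for $h\in H^2(\bR)$,
\[
V_t M_\lambda M_{\phi_s} h = M_{\lambda e^t}\, M_{\phi_{s e^{2t}}}(V_t h),
\]
using $h(e^t\,\cdot)=e^{-t/2}V_t h$ and $V_t H^2(\bR)=H^2(\bR)$, so that $V_t K_{\lambda,s}=K_{\lambda e^t,\,s e^{2t}}$; likewise $V_t L^2(-\alpha,\infty)=L^2(-\alpha e^{-t},\infty)$.

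With these formulas the identification is immediate. For $s>0$ and $t>0$ the image $K_{\lambda e^t,\,s e^{2t}}$ lies in the nest $\N_{s e^{2t}}$ with $s e^{2t}\neq s$; since any two nontrivial subspaces drawn from nests with distinct $s$-parameters intersect trivially, the containment $V_t K_{\lambda,s}\subseteq K_{\lambda,s}$ must fail. Hence no $K_{\lambda,s}$ with $s>0$ is invariant. For $s=0$ one has $K_{\lambda,0}=e^{i\lambda x}H^2(\bR)$ and $V_t K_{\lambda,0}=e^{i\lambda e^t x}H^2(\bR)$, which is contained in $e^{i\lambda x}H^2(\bR)$ for all $t\geq 0$ exactly when $\lambda\geq 0$. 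Finally, for the Volterra nest the containment $L^2(-\alpha e^{-t},\infty)\subseteq L^2(-\alpha,\infty)$ holds for all $t\geq 0$ exactly when $\alpha\geq 0$. Collecting the surviving subspaces together with the trivial ones yields precisely $\L$.

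The main obstacle is the reverse inclusion, and within it the clean derivation of the flow $(\lambda,s)\mapsto(\lambda e^t,\,s e^{2t})$ induced by $V_t$ on $Lat\,\A_p$, together with the correct bookkeeping of which orientation of the dilation flow (that is, $t\geq 0$ rather than $t\leq 0$) preserves each boundary family. The trivial-intersection property of distinct-$s$ nests, quoted from the discussion preceding \eqref{LatAp}, is exactly what converts the non-invariance of the interior nests into the sharp conclusion $s=0$, after which the sign conditions on $\lambda$ and $\alpha$ finish the proof.
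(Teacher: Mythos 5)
Your proof is correct and follows essentially the same route as the paper: both reduce to $Lat\,\A_{ph}\subseteq Lat\,\A_p$ via \eqref{LatAp} and then impose $V_t$-invariance, computing the dilation action $V_tK_{\lambda,s}=K_{\lambda e^t,\,se^{2t}}$ and $V_tL^2(-\alpha,\infty)=L^2(-\alpha e^{-t},\infty)$ to isolate exactly the subspaces of $\L$. Your additional explicit steps (the forward inclusion and the appeal to the trivial-intersection property of distinct-$s$ nests) only spell out details the paper leaves implicit.
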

\begin{proof}
Since $\A_{ph}$ is a superalgebra of $\A_p$ we have $Lat \A_{ph}\subseteq Lat\A_{p}$. Given a subspace $K\in Lat\A_{p}$, as in Eq. \eqref{LatAp}, there are two  cases to consider.

Suppose first that $K=M_\lambda M_{\phi_s} H^2(\bR)$, where $\phi_s(x)=e^{-isx^2/2}$, where $s\geq 0, \lambda\in\bR$. Then $K\in Lat \A_{ph}$ if and only if  $V_tK \subseteq K$ for $t\geq 0$. Given $f\in H^2(\bR)$, we have
\begin{align*}
V_t(e^{-isx^2/2}e^{i\lambda x}f(x))=
e^{t/2}e^{-is(e^tx)^2/2}e^{i\lambda (e^tx)}f(e^tx)=   
e^{t/2}e^{-i(se^{2t})x^2/2}e^{i(\lambda e^t)x}f(e^tx)
\end{align*}
Thus $V_tK\subseteq K$ if and only if $s=0$ and $\lambda\geq 0$.

For the second case let $K=L^2[\alpha,+\infty)$, for $\alpha\in\bR$. Then $V_tK\subseteq K$ if and only if $\alpha\leq 0$ and so the proof is complete.
\end{proof}

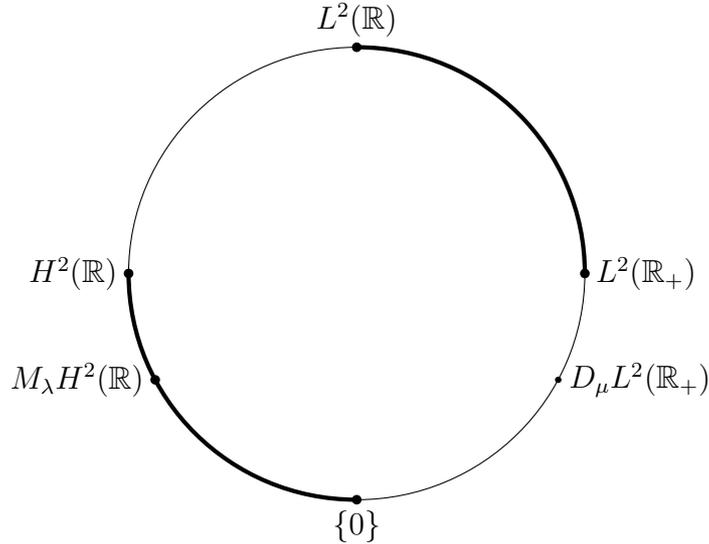
\begin{figure}[h!]
\begin{center}
\begin{tikzpicture}
\filldraw [ultra thick] (3,6) circle (1pt);
\draw (3,6) node [above]  {$L^2(\bR)$};
\filldraw [ultra thick] (3,0) circle (1pt);
\draw (3,0) node [below] {$\{0\}$};
\filldraw [ultra thick] (0,3) circle (1pt);
\draw (0,3) node [left]  {$H^2(\bR)$};
\filldraw [ultra thick] (6,3) circle (1pt);
\draw (6,3) node [right]  {$L^2(\bR_+)$};
\draw (0.35,1.59) node [left]  {$M_{\lambda}H^2(\bR)$};
\filldraw [black] (5.65,1.59) circle (1pt);
\draw (5.65,1.59) node [right]  {$D_\mu L^2(\bR_+)$};
\filldraw [ultra thick] (0.35,1.59) circle (1pt);
\draw (3,3) circle [radius=3.0];
\draw [very thick,ultra thick](3,0) to [out=180,in=270] (0,3);
\draw [very thick,ultra thick](6,3) to [out=90,in=0] (3,6);
\end{tikzpicture}
\end{center}
\caption{The binest $\L$ shown (in bold lines) as a subset of the Fourier binest.}
\label{f:binestL}
\end{figure}


Since $\A_{ph}\subseteq Alg\L$ is evident, it suffices to prove the converse inclusion. Our strategy is once again to identify the Hilbert Schmidt operators
in these two algebras. 

Given a function $k\in L^2(\bR^2)$ let $k_F,\,k_{F^\ast}$  and $V_tk$ denote the kernel functions of 
the integral operators $FIntkF^\ast,\, F^\ast Intk F$ and $V_tIntk$ respectively. We now note that $k_F=JF_2k$, where
$J$ is the flip operator, with $(Jf)(x,y)=f(x,-y)$, and $F_2$ is the two-dimensional Fourier transform
\begin{align*}
(F_2f)(\xi,\omega)=\frac{1}{2\pi}\int_{\bR}\int_{\bR}f(x,y) e^{-i(x\xi+y\omega)}dxdy 
\end{align*}
Indeed
\begin{align*}
(FIntkF^\ast)(x)&=\frac{1}{\sqrt{2\pi}}\int_\bR (Intk F^\ast f)(y) e^{-ixy} dy\\&
=\frac{1}{\sqrt{2\pi}}\int_\bR\left(\int_\bR k(y,\omega) (F^\ast f)(\omega)d\omega\right) e^{-ixy} dy\\&
=\frac{1}{2\pi}\int_\bR\left(\int_\bR k(y,\omega)\left(\int_\bR f(\xi)e^{i\omega\xi}d\xi\right)d\omega\right) e^{-ixy} dy \\&
=\frac{1}{2\pi}\int_\bR\left(\int_\bR \int_\bR k(y,\omega)e^{-ixy}e^{i\omega\xi}dy d\omega\right) f(\xi) d\xi\\&
= \int_\bR (F_2k)(x,-\xi)f(\xi)f\xi.
\end{align*}
The significance  of the above observation is that we can make use of properties of the 2D Fourier transform, and especially the fact that 
it commutes with the rotation operators. That is
\begin{align*}
F_2R_\theta=R_\theta F_2
\end{align*}
where $R_\theta$ represents the operator of clockwise rotation, for $\theta>0$, and  $\theta\in[-\pi,\pi)$. Considering the rotation operators as acting on the space of the 
kernel functions we have the following reformulation of the characterization of the Hilbert-Schmidt operators of the parabolic algebra;
\begin{align*}
\A_p\cap{\C_2}= \{Intk\,:\,k\in R_{-\pi/4}(H^2(\bR)\otimes L^2(\bR_-))\}= \{Intk\,:\,k_F\in R_{\pi/4}(L^2(\bR_+)\otimes \overline{H^2(\bR)})\}.
\end{align*} 
The convenience of the above characterization is apparent in the next lemma.
\begin{lemma}\label{key1}
Let $Intk$ lie in the ideal $\I$ generated by  the finite rank operators of $Alg\L$. Then $Intk\in \A_{ph}\cap {\C_2}$.
\end{lemma}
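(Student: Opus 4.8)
The plan is to show that an operator in the ideal $\I$ is a Hilbert--Schmidt operator lying in $\A_{ph}$ by reducing the problem to the parabolic case, where we already have the clean characterization $\A_p \cap \C_2 = \{Intk : k \in R_{-\pi/4}(H^2(\bR)\otimes L^2(\bR_-))\}$. Recall that $\I = P_+B(L^2(\bR))(I-Q_+)$, where $P_+$ projects onto $L^2(\bR_+)$ and $Q_+$ onto $H^2(\bR)$. Thus a Hilbert--Schmidt operator $Intk$ in $\I$ has a kernel $k(x,y)$ whose range restriction forces $k$ to be supported (in the first variable) on $\bR_+$, coming from the $P_+$ factor on the left; and the factor $(I-Q_+)$ on the right, read on the Fourier side, forces a corresponding half-space support condition on $k_F = JF_2 k$. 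Concretely, I expect these two conditions to say that $k$ is supported on the quarter-plane $\bR_+ \times \bR$ and simultaneously $k_F$ is supported on $\bR_+ \times \bR$ (up to the flip $J$), i.e. that $k$ lives in $L^2(\bR_+)\otimes L^2(\bR)$ and $F_2 k$ lives in an analogous half-space.

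First I would write down precisely what membership in $\I$ imposes on $k$. The left factor $P_+$ means $k(x,y) = 0$ for almost every $x < 0$. The right factor $(I-Q_+)$ means that, in the second variable, the kernel annihilates $H^2(\bR)$; translating through the Paley--Wiener identity $FH^2(\bR) = L^2(\bR_+)$ and the computation $k_F = JF_2 k$ recorded just above the lemma, this becomes a half-space support condition on the two-dimensional Fourier transform of $k$. The key point is that these are exactly the two support conditions — one on $k$, one on $F_2 k$ — that characterize membership in $\A_p \cap \C_2$ after applying the appropriate rotation $R_{\pm\pi/4}$. So the bulk of the argument is bookkeeping: verifying that the quarter-plane support coming from $\I$ sits inside the rotated quadrant $R_{-\pi/4}(H^2(\bR)\otimes L^2(\bR_-))$, equivalently that $k_F \in R_{\pi/4}(L^2(\bR_+)\otimes \overline{H^2(\bR)})$.

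The main obstacle, I expect, is reconciling the sharp support conditions defining $\I$ (honest vanishing on half-planes, coming from the projections $P_+$ and $I-Q_+$) with the Hardy-space conditions defining $\A_p \cap \C_2$, which involve analyticity rather than mere support. Here the rotation-covariance $F_2 R_\theta = R_\theta F_2$ is the decisive tool: the conditions cutting out $\I$ are more restrictive than those cutting out $\A_p \cap \C_2$, because vanishing on a half-space in one variable is stronger than the corresponding one-sided condition after rotation by $\pi/4$. I would make this precise by checking that the quarter-subspace $L^2(\bR_+)\otimes L^2(\bR)$ (with the matching Fourier-side condition) is contained in $R_{-\pi/4}(H^2(\bR)\otimes L^2(\bR_-))$, using that $L^2(\bR_+) \subseteq H^2$-type containments fail but the combined two-variable condition survives under the geometry of the two cones.

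Once the containment $k \in R_{-\pi/4}(H^2(\bR)\otimes L^2(\bR_-))$ is established, the characterization gives immediately that $Intk \in \A_p \cap \C_2 \subseteq \A_{ph}\cap \C_2$, completing the proof. Since every finite-rank operator in $Alg\L$ lies in $\I$ by Proposition \ref{p:finiterank}, and $\I$ is generated as an ideal by these, I would finally note that it suffices to treat the Hilbert--Schmidt elements of $\I$ directly, as the lemma is stated, so no separate approximation of the ideal structure is needed at this stage. The cleanest route is therefore to translate the two projection conditions into two rotated support/analyticity conditions on $k$ and invoke the parabolic characterization verbatim.
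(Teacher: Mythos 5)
Your translation of membership in $\I$ into kernel conditions is the correct first step, and it matches the paper: for $Intk\in\I\cap\C_2$ the factor $P_+$ gives $k(x,y)=0$ for a.e.\ $x<0$, and the factor $(I-Q_+)$ gives $k(x,\cdot)\in H^2(\bR)$ for a.e.\ $x$, so that $k\in L^2(\bR_+)\otimes H^2(\bR)$ and $k_F\in\overline{H^2(\bR)}\otimes L^2(\bR_-)$. The fatal step is the next one: your claimed containment of this quarter subspace in $R_{-\pi/4}(H^2(\bR)\otimes L^2(\bR_-))$, i.e.\ the conclusion $Intk\in\A_p\cap\C_2$, is false, and no bookkeeping can rescue it, because the two spaces intersect trivially. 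A kernel in $\A_p\cap\C_2$ is supported in the half-plane $\{(x,y):y\le x\}$ (upper triangularity with respect to the Volterra nest), whereas a nonzero $k\in L^2(\bR_+)\otimes H^2(\bR)$ has, on a positive-measure set of $x>0$, slices $k(x,\cdot)$ that are nonzero elements of $H^2(\bR)$ and hence nonvanishing for almost every $y$. The same failure is visible structurally: $\I$ contains nonzero rank-one operators, while $\A_p=\A_{FB}$ contains no nonzero finite-rank operators at all; if your containment held, then $\I\subseteq\A_p$ and the algebra $\overline{\A_p+\I}^{WOT}$ would collapse to $\A_p$, contradicting Section 4 of the paper. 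Your heuristic that the conditions cutting out $\I$ are ``more restrictive'' than those cutting out $\A_p\cap\C_2$ is exactly what is wrong: the two pairs of support/analyticity conditions are incomparable, not nested.

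What is missing is any use of the dilation semigroup, and that is precisely the new ingredient in the paper's proof: elements of $\I\cap\C_2$ are not parabolic Hilbert--Schmidt operators, but they are Hilbert--Schmidt norm limits of dilates of such operators. By density one reduces to an elementary tensor $k_F=h\otimes g$ with $h\in\overline{H^2(\bR)}$, $g\in L^2(\bR_-)$, and forms the sheared kernels $k_F^t(x,y)=(V_th)(x)\,g(y-x)$ for $t\ge 0$. These do lie in $R_{\pi/4}(L^2(\bR_+)\otimes\overline{H^2(\bR)})$, the Fourier-side description of the parabolic Hilbert--Schmidt kernels, so $Intk^t\in\A_p\cap\C_2$ where $k^t=(k_F^t)_{F^*}$. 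A direct computation plus dominated convergence gives $V_{-t}k_F^t\to k_F$ in $L^2(\bR^2)$ as $t\to+\infty$, and the identity $V_tIntk^t=F^*V_{-t}Intk_F^tF$ then exhibits $Intk$ as a Hilbert--Schmidt norm limit of the operators $V_t\,Intk^t$. Each of these lies in $\A_{ph}$, since $V_t$ (for $t\ge 0$) and $\A_p$ both do, and $\A_{ph}$ is weakly closed, so $Intk\in\A_{ph}\cap\C_2$. So the rotation-covariance of $F_2$ does play a role, but only in recognizing the sheared kernels $k_F^t$ as parabolic ones --- not, as you propose, in placing $k$ itself in $\A_p$.
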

\begin{proof}
It follows from the previous section that $k\in L^2(\bR_+)\otimes H^2(\bR)$ and so  $k_F$ is an element of $\overline{H^2(\bR)}\otimes L^2(\bR_-)$. 
Without loss of generality we may assume that $k_F(x,y)=h(x)g(y)$, where $h\in\overline{H^2(\bR)}$, 
$g\in L^2(\bR_-)$. Define for every $t\geq0$ the functions 
\begin{align*}
h_t(x)=V_t h(x)=e^{t/2}h(e^tx) ~~~~\text{ and }~~~~g_t(y)=g(-y).
\end{align*}
Consequently, each function $k_F^t(x,y)=h_t(x)g_t(x-y)$ lies in $R_{-\pi/4} (\overline{H^2(\bR)}\otimes L^2(\bR_-))$. Since this space can be written as $R_{\pi/4}(L^2(\bR_+)\otimes \overline{H^2(\bR)})$, it follows that $Int k^t\in A_p\cap\C_2$ where $k^t = (k_F^t)_{F^*}$.
Therefore, since $V_tIntk=F^\ast V_{-t}Intk_F F$, it suffices to show that $V_{-t}k_F^t$ converges in norm to $k_F$. 
\begin{align*}
V_{-t}k_F^t(x,y)&=e^{-t/2}\,k_F^t(e^{-t}x,y)=e^{-t/2}\,h_t(e^{-t}x)\,g_t(e^{-t}x-y)\\&
=e^{-t/2}e^{t/2}h(e^te^{-t}x)g(y-e^{-t}x)e^{-t/2}=h(x)g(y-e^{-t}x)\rightarrow h(x)g(y),
\end{align*}
as $t\rightarrow +\infty$. By the dominated convergence theorem, $V_{-t}Intk_F^t$ converge to $Intk_F$ and hence $Intk\in \A_{ph}\cap\C_2$.
\end{proof}

The next lemma is crucial for the proof of the reflexivity of the triple semigroup algebra and also yields the two-variable variant of the Paley-Wiener theorem given in Corollary \ref{c:PWcorollary}.

Given $\theta_0\in[0,\pi)$, let 
\begin{align*}
Q_1^{\theta_0}&=\left\{(x,y)\in\bR^2: \arctan(y/x)\in\left[-\frac{\pi}{2}-\theta_0,\frac{\pi}{2}\right]\right\}\\
Q_2^{\theta_0}&=\left\{(x,y)\in\bR^2: \arctan(y/x)\in [-\pi,\theta_0]\right\}.
\end{align*}
Define also the set
\begin{align*}
\K_{\theta_0}=\{k\in L^2(\bR^2):\esssupp \,k\subseteq Q_1^{\theta_0}\}\cap
\{k\in L^2(\bR^2):\esssupp\,k_F \subseteq Q_2^{\theta_0}\}
\end{align*}
(see Figure \ref{f:support}) and the set
\begin{align*}
\S_{\theta_0}=\overline{span\{R_\theta( L^2(\bR_+)\otimes H^2(\bR)),\,\theta\in\{0,\theta_0\}\}}^{\|\cdot\|}.
\end{align*}

\begin{figure}[h]
\begin{center}
\begin{tikzpicture}
\path [fill=lightgray] (0,0) to (3.5,0) -- (3.5,0) to (3.5,3.5) -- (3.5,3.5) to (1.75,3.5) -- (1.75,3.5) to (1.75,1.75) -- (1.75,1.75) to (0,3.5) -- 
(0,3.5) to (0,0);
\draw [<->] (0,1.75) -- (3.5,1.75);
\draw (3.3,1.75) node [below] {$x$};
\draw [<->] (1.75,0) -- (1.75,3.5);
\draw (1.75, 3.3) node [left] {$y$};
\draw (2,1) node [right] {$\esssupp\,k$};
\draw [<-] (1.45,2.05) to [out=250, in=180] (1.75,1.25);
\draw (0.75,1) node [right] {$\theta_0$};

\draw (5.25,1.75) node [right] {$\bigcap$};

\path [fill=lightgray] (7.5,0) to (11,0) -- (11,0) to (11,3.5)-- (11,3.5) to (7.5,3.5) -- (7.5,3.5) to (9.25,1.75) -- (9.25,1.75) to (7.5,1.75) -- 
(7.5,1.75) to (7.5,0);
\draw [<->] (7.5,1.75)--(11,1.75);
\draw [<->] (9.25,0) -- (9.25,3.5);
\draw (10.8,1.75) node [below] {$x$};
\draw (9.25, 3.3) node [left] {$y$};
\draw [<-] (8.95,2.05) to [out=60, in=90] (9.75,1.75);
\draw (9.85,2) node [right] {$\theta_0$};
\draw (9.75,1) node [right] {$\esssupp\,k_F$};

\end{tikzpicture}
\end{center}
\caption{A function $k\in L^2(\bR^2)$ is an element of $\K_{\theta_0}$, if and only if both $\esssupp \,k$ and $\esssupp \,k_F$  lie in the respective shaded areas.}
\label{f:support}
\end{figure}
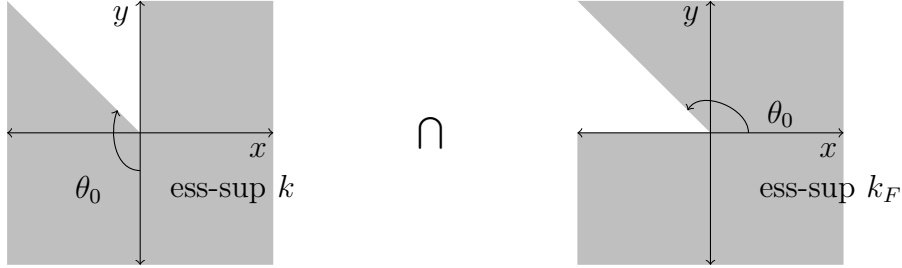

\begin{lemma}\label{key2}
$\K_{\theta_0}\,=\,\S_{\theta_0}$, for every $\theta_0\in[0,\pi)$.
\end{lemma}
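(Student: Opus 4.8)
The plan is to prove the two inclusions separately, with $\S_{\theta_0}\subseteq\K_{\theta_0}$ routine and $\K_{\theta_0}\subseteq\S_{\theta_0}$ carrying all the analytic weight. First I would record that $\K_{\theta_0}$ is norm-closed, being the intersection of the two subspaces $\{k:\esssupp k\subseteq Q_1^{\theta_0}\}$ and $\{k:\esssupp k_F\subseteq Q_2^{\theta_0}\}$, each of which is closed in $L^2(\bR^2)$ (a support constraint cuts out a closed subspace, and $F_2,J$ are unitary). For the easy inclusion it then suffices to check that the two generating subspaces $R_\theta(L^2(\bR_+)\otimes H^2(\bR))$, $\theta\in\{0,\theta_0\}$, lie in $\K_{\theta_0}$. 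For $\theta=0$ this is immediate: $\esssupp k\subseteq\{x\geq 0\}$, and since $k(x,\cdot)\in H^2(\bR)$ forces the partial Fourier transform in $y$ to be supported in $\{\omega\geq 0\}$, the flip $k_F=JF_2k$ is supported in the lower half-plane; both half-planes sit inside $Q_1^{\theta_0}$ and $Q_2^{\theta_0}$ respectively. For $\theta=\theta_0$ I would use that $F_2$ commutes with rotations and that $J$ intertwines $R_{\theta_0}$ with $R_{-\theta_0}$, transporting the two half-plane conditions by the rotation; since each cone $Q_i^{\theta_0}$ is precisely the union of the $\theta=0$ and $\theta=\theta_0$ half-planes, this closes the easy inclusion, and taking norm closures gives $\S_{\theta_0}\subseteq\K_{\theta_0}$.

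For the hard inclusion, take $k\in\K_{\theta_0}$; the goal is a decomposition $k=k_1+k_2$ with $k_1\in L^2(\bR_+)\otimes H^2(\bR)$ and $k_2\in R_{\theta_0}(L^2(\bR_+)\otimes H^2(\bR))$, at least in the closed span. Since the whole statement is equivariant under a global rotation $R_{-\psi}$ (which carries $\K_{\theta_0}$ and $\S_{\theta_0}$ to their rotated analogues with the same opening angle $\theta_0$), I would first rotate to a symmetric normalization in which the convex wedge $\bR^2\setminus Q_1^{\theta_0}$, of opening $\pi-\theta_0<\pi$ on which $k$ vanishes, is symmetric about a coordinate axis, so as to minimise casework. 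The essential difficulty is that one cannot split $k$ by multiplying with the characteristic function of a half-plane: a spatial cut-off destroys the Fourier-support (analyticity) constraint on $k_F$, while a cut-off in the Fourier variable destroys the spatial support constraint on $k$. The two constraints must be reconciled simultaneously over the overlap wedge where the two half-planes defining each cone meet, and this is exactly the genuinely two-dimensional content of the lemma.

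To carry out the split I would pass to a partial Fourier transform in the variable transverse to a chosen cone edge, which converts the cone-support condition into the combination of a half-line support condition in one variable and a Hardy-space analyticity condition in the other. The F.\ and M.\ Riesz theorem, together with the classical one-variable Paley--Wiener theorem in the form $FH^2(\bR)=L^2(\bR_+)$, then permits the overlap contribution to be allocated analytically rather than by a sharp cut, yielding candidate pieces $k_1,k_2$ with the correct support on one side and the correct analyticity on the other. Finally, as in Lemma \ref{key1}, I expect to reduce first to a dense class of well-behaved kernels (for instance kernels that are Schwartz and supported strictly inside the cones, or finite sums of rational tensors) for which the decomposition is explicit, and then recover the general case by a norm-limit argument justified by a dominated-convergence estimate; this is legitimate precisely because $\S_{\theta_0}$ is closed.

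The step I expect to be the main obstacle is this simultaneous reconciliation of support and analyticity across the overlap wedge: equivalently, showing that the sole obstruction to membership in the closed span of the two extremal quarter-subspaces is the vanishing of $k$ on $\bR^2\setminus Q_1^{\theta_0}$ and of $F_2k$ on the corresponding complementary wedge. Every supporting fact—closedness of $\K_{\theta_0}$, the commutation of $F_2$ with $R_\theta$, and the description of $\A_p\cap\C_2$ as $R_{-\pi/4}(H^2(\bR)\otimes L^2(\bR_-))$ recalled above—feeds into controlling the pieces produced by the analytic split, but the heart of the proof is the F.\ and M.\ Riesz argument that performs the allocation over the overlap.
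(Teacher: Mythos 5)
Your first half is fine: proving $\S_{\theta_0}\subseteq\K_{\theta_0}$ by checking the two generating quarter-subspaces against the support conditions, using $F_2R_\theta=R_\theta F_2$ and the closedness of $\K_{\theta_0}$, is exactly the paper's argument. The gap is in the hard inclusion $\K_{\theta_0}\subseteq\S_{\theta_0}$, where what you offer is a plan rather than a proof, and the plan defers precisely the content of the lemma: you propose to produce, for $k\in\K_{\theta_0}$, pieces $k_1\in L^2(\bR_+)\otimes H^2(\bR)$ and $k_2\in R_{\theta_0}(L^2(\bR_+)\otimes H^2(\bR))$ by an ``F.\ and M.\ Riesz allocation over the overlap wedge,'' but you never say what this allocation is — you yourself label it ``the main obstacle.'' Beyond being incomplete, the strategy has two structural defects. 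First, $\S_{\theta_0}$ is a \emph{closed span}: the algebraic sum of the two quarter-subspaces need not be closed, so a general element of $\S_{\theta_0}$ (let alone of $\K_{\theta_0}$) need not admit any decomposition $k=k_1+k_2$ of the stated form; the object you are trying to construct may simply not exist. Second, your fallback — carry out an explicit decomposition on a dense class of well-behaved kernels and pass to norm limits — founders because no such dense class of $\K_{\theta_0}$ is exhibited: mollification and truncation each destroy one of the two simultaneous support constraints defining $\K_{\theta_0}$, so density is itself a nontrivial (and unaddressed) problem. In Lemma \ref{key1} a limiting argument was available only because one starts from kernels already of product form inside a known subspace.

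The paper's proof avoids constructing any decomposition at all; this is the missing idea. One assumes $k\in\K_{\theta_0}\cap\S_{\theta_0}^\perp$ and shows $k=0$, which suffices since $\S_{\theta_0}\subseteq\K_{\theta_0}$ and both are closed subspaces, so $\K_{\theta_0}=\S_{\theta_0}\oplus(\K_{\theta_0}\cap\S_{\theta_0}^\perp)$. Writing $P_\theta$ for the projection onto $R_{-\theta}(L^2(\bR)\otimes L^2(\bR_-))$, one splits $k=P_{\pi/2}k+P^\bot_{\pi/2}k$ and applies $JF_2$. The two key observations are that $\bigl(P_0(P_{\pi/2}k)_F\bigr)_{F^*}$ and $\bigl(P_{\theta_0}(P^\bot_{\pi/2}k)_F\bigr)_{F^*}$ lie in $\S_{\theta_0}$; each is simultaneously an orthogonal component of $k$ (so subtracting it cannot increase the norm) and an element of $\S_{\theta_0}$ (so, since $k\perp\S_{\theta_0}$, subtracting it strictly increases the norm unless it vanishes). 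Pythagoras then forces both to be zero. What remains is
\begin{align*}
k_F=P_0^\bot(P_{\pi/2}k)_F+P_{\theta_0}^\bot(P^\bot_{\pi/2}k)_F,
\end{align*}
a sum of a function in $\ol{H^2(\bR)}\otimes L^2(\bR_+)$ and a rotated counterpart, each supported in a half-plane; intersecting these supports with $\esssupp k_F\subseteq Q_2^{\theta_0}$ leaves each slice an $\ol{H^2(\bR)}$-function vanishing on a set of positive measure, hence zero. Thus $k_F=0$ and $k=0$. Note that the Hardy-space input enters only at this last step, to annihilate the residual pieces — not to perform any allocation across the overlap wedge, which is the step your proposal needs but does not supply.
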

\begin{proof}
Let $k\in R_\theta( L^2(\bR_+)\otimes H^2(\bR))$, with $\theta\in\{0,\theta_0\}$. Then
$\esssupp f \subseteq Q_1^{\theta_0}$. Also the function $k_F$ lies in the space $JF_2 R_\theta( L^2(\bR_+)\otimes H^2(\bR))$,
which can be written as
\begin{align*}
JF_2 R_\theta( L^2(\bR_+)\otimes H^2(\bR))=R_{-\theta}JF_2( L^2(\bR_+)\otimes H^2(\bR))=
R_{-\theta}( \overline{H^2(\bR)}\otimes L^2(\bR_-)).
\end{align*}
Hence $\esssupp\,k_F\subseteq  Q_2^{\theta_0}$, and so it follows that $\S_{\theta_0}\subseteq \K_{\theta_0}$.

To prove the other inclusion, assume that there is a function $k\in \K_{\theta_0}\,\cap\,\S_{\theta_0}^\perp$. Then the Hilbert space geometry of  $L^2(\bR^2)$ ensures that 
\begin{align}\label{eqHG}
\|k+ k_S\|>\|k\|, \,\forall\, k_S\in \S_{\theta_0}\backslash \{0\}.
\end{align}
Define now the orthogonal projections $P_\theta=proj(R_{-\theta}(L^2(\bR)\otimes L^2(\bR_-)),\,\theta\in \{0,\theta_0,\pi/2\}$. 
Noting that $P_{\pi/2}=proj(L^2(\bR_+)\otimes L^2(\bR))$, decompose $k$ as the sum of two orthogonal parts,
\begin{align*}
k=P_{\pi/2}\,k+P^\bot_{\pi/2}\,k
\end{align*} 
where $P^\bot_{\pi/2}=I-P_{\pi/2}$. Applying to both sides the operator $JF_2$, we have
\begin{align*}
k_F=(P_{\pi/2}\,k)_F+(P^\bot_{\pi/2}\,k)_F.
\end{align*}

Also
\begin{align}\label{eqNormSq}
\|k_F\|=\|P_0(P_{\pi/2}\,k)_F\|^2+\|P_0^\bot(P_{\pi/2}\,k)_F\|^2
+\|P_{\theta_0}(P^\bot_{\pi/2}\,k)_F\|^2 +
\|P_{\theta_0}^\bot(P^\bot_{\pi/2}\,k)_F\|^2
\end{align}
Since $P_0(P_{\pi/2}\,k)_F\in \, \overline{H^2(\bR)}\otimes L^2(\bR_-)$
which is the space $JF_2(L^2(\bR_+\otimes H^2(\bR))$, it follows that $(P_0(P_{\pi/2}\,k)_F)_{F^*}$ lies in $S_{\theta_0}$.
Since subtraction of this function from $k$ cannot decrease the norm of $k$, in view of Eq.\eqref{eqHG} and Eq.\eqref{eqNormSq} , it follows that this function is the zero function.

Similarly, taking into account that $k\in L^2(Q_1^{\theta_0})$, we have $P^\bot_{\pi/2}\,k\in R_{\theta_0}(L^2(\bR_+)\otimes L^2(\bR))$, which implies that 
$(P^\bot_{\pi/2}\,k)_F$ lies in $R_{-\theta_0}(\overline{H^2(\bR)}\otimes L^2(\bR))$. Therefore,
\begin{align*}
P_{\theta_0}(P^\bot_{\pi/2}\,k)_F\in \,R_{-\theta_0}(\overline{H^2(\bR)}\otimes L^2(\bR_-))
\end{align*}
and so $(P_{\theta_0}(P^\bot_{\pi/2}\,k)_F)_{F^*}$ lies in $S_{\theta_0}$. Subtraction of this function from $k$ cannot decrease $\|k\|$, and this subtraction corresponds to the subtraction of $P_{\theta_0}(P^\bot_{\pi/2}\,k)_F$ from $k_F$, and so  $P_{\theta_0}(P^\bot_{\pi/2}\,k)_F=0$.

We now see that
\begin{align*}
k_F=(P_{\pi/2}\,k)_F+(P^\bot_{\pi/2}\,k)_F=P_0^\bot(P_{\pi/2}\,k)_F+
P_{\theta_0}^\perp(P^\perp_{\pi/2}\,k)_F.
\end{align*}
The first function in the sum representation is in $\overline{H^2(\bR)}\otimes L^2(\bR_+)$, and is supported in the upper half plane, while the second function is supported in the half plane $y \leq -x$. However, we also have 
$k_F\in L^2(Q_2)$. These three facts are indicated in Figure \ref{supptildek} which depicts  the essential support of $k_F$ and the two forms of the semi-infinite lines on which (almost every) restriction of $k_F$ agrees with the restriction of a function in $\overline{H^2(\bR)}$.

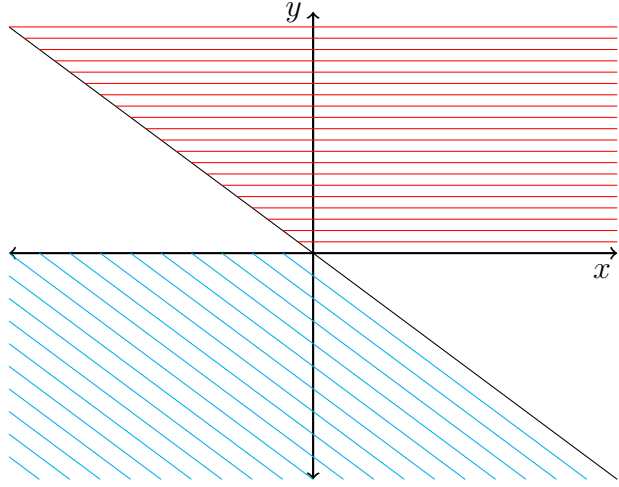
\begin{figure}[h]
\begin{center}
\begin{tikzpicture}
\draw [<->][thick] (1,3) -- (9,3);
\draw (8.8,3) node [below] {$x$};
\draw [<->][thick] (5,0) -- (5,6.2);
\draw (5, 6.2) node [left] {$y$};
\draw (1,6) -- (9,0);
\draw [red](1,6) -- (9,6);
\draw [red](1.2,5.85) -- (9,5.85);
\draw [red](1.4,5.7) -- (9,5.7);
\draw [red](1.6,5.55) -- (9,5.55);
\draw [red](1.8,5.4) -- (9,5.4);
\draw [red](2,5.25) -- (9,5.25);
\draw [red](2.2,5.1) -- (9,5.1);
\draw [red](2.4,4.95) -- (9,4.95);
\draw [red](2.6,4.8) -- (9,4.8);
\draw [red](2.8,4.65) -- (9,4.65);
\draw [red](3,4.5) -- (9,4.5);
\draw [red](3.2,4.35) -- (9,4.35);
\draw [red](3.4,4.2) -- (9,4.2);
\draw [red](3.6,4.05) -- (9,4.05);
\draw [red](3.8,3.9) -- (9,3.9);
\draw [red](4,3.75) -- (9,3.75);
\draw [red](4.2,3.6) -- (9,3.6);
\draw [red](4.4,3.45) -- (9,3.45);
\draw [red](4.6,3.3) -- (9,3.3);
\draw [red](4.8,3.15) -- (9,3.15);
\draw [cyan](4.6,3) -- (8.6,0);
\draw [cyan](4.2,3) -- (8.2,0);
\draw [cyan](3.8,3) -- (7.8,0);
\draw [cyan](3.4,3) -- (7.4,0);
\draw [cyan](3,3) -- (7.0,0);
\draw [cyan](2.6,3) -- (6.6,0);
\draw [cyan](2.2,3) -- (6.2,0);
\draw [cyan](1.8,3) -- (5.8,0);
\draw [cyan](1.4,3) -- (5.4,0);
\draw [cyan](1,3) -- (5.0,0);
\draw [cyan](1,2.7) -- (4.6,0);
\draw [cyan](1,2.4) -- (4.2,0);
\draw [cyan](1,2.1) -- (3.8,0);
\draw [cyan](1,1.8) -- (3.4,0);
\draw [cyan](1,1.5) -- (3,0);
\draw [cyan](1,1.2) -- (2.6,0);
\draw [cyan](1,0.9) -- (2.2,0);
\draw [cyan](1,0.6) -- (1.8,0);
\draw [cyan](1,0.3) -- (1.4,0);
\end{tikzpicture}
\end{center}
\caption{The essential support of $k_F$.}
\label{supptildek}
\end{figure}

Since $k_F\in L^2(Q_2)$ it follows that
\[
\|k_F\|^2 =\|P_{\theta_0}(P_{\pi/2}\,k)_F\|^2 + \|P_0(P^\perp_{\pi/2}\,k)_F\|^2
\]
and so we deduce that  $P_{\theta_0}(P_{\pi/2}\,k)_F = (P_{\pi/2}\,k)_F$
and $P_0(P^\perp_{\pi/2}\,k)_F=(P^\perp_{\pi/2}\,k)_F$.
However, 
\[
(P_{\pi/2}\,k)_F \in \overline{H^2(\bR)}\otimes L^2(\bR)\quad  
\textrm{and}\quad  (P_{\pi/2}^\perp\,k)_F\in R_{-\theta_0}(\overline{H^2(\bR)}\otimes L^2(\bR))
\]
and so both functions are equal to zero, as every 
$\ol{H^2(\bR)}$-slice is zero on a non-null interval. Consequently, $k_F=0$, which implies that $k=0$ and this fact completes the proof.
\end{proof}

\begin{cor}\label{c:PWcorollary}
Let $0<\alpha <\pi/2$ and let $C_\alpha$ be the proper cone 
of points $(x,y)$ with $ x\geq 0$ and  $|\arctan{y/x}| < \alpha$.
Then the following conditions are equivalent for a function $k\in L^2(\bR^2)$.

(i) $k$ vanishes on $C_\alpha$ and $\hat{k}$ vanishes on $R_{-\pi /2}C_\alpha$.

(ii) $k$ lies in the closed linear span of $R_{\alpha/2 }(H^2(\bR) \otimes L^2(\bR_-))$ and $R_{-\alpha/2 }(H^2(\bR) \otimes L^2(\bR_+))$.
\end{cor}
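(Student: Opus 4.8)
The plan is to deduce the corollary from Lemma~\ref{key2} by transporting that identity under a single rotation of the plane. Throughout I write $\hat k = F_2 k$ and recall from the discussion preceding the lemma that $k_F = JF_2 k = J\hat k$, where $J$ is the flip $(x,y)\mapsto (x,-y)$; thus any essential-support condition on $\hat k$ is, after applying $J$, an essential-support condition on $k_F$, which is the form in which $\K_{\theta_0}$ is phrased. I would first recast condition (i) as a pair of such conditions. Saying that $k$ vanishes on the cone $C_\alpha$ of half-angle $\alpha$ about the positive $x$-axis is the same as saying that $\esssupp k$ lies in the complementary closed wedge, of angular width $2\pi-2\alpha=\pi+\theta_0$ with $\theta_0=\pi-2\alpha\in(0,\pi)$; likewise $\hat k$ vanishing on $R_{-\pi/2}C_\alpha$ says that $\esssupp k_F$ lies in a complementary wedge of the same width $\pi+\theta_0$. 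So (i) asserts exactly that $\esssupp k$ and $\esssupp k_F$ lie in two prescribed wedges, each a rotate of $Q_1^{\theta_0}$ and of $Q_2^{\theta_0}$, respectively.

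Next I would produce the aligning rotation. From $F_2 R_\theta = R_\theta F_2$ and $JR_\theta = R_{-\theta}J$ one gets $(R_\psi k)_F = R_{-\psi}k_F$, so rotating $k$ by $R_\psi$ rotates its two supports in opposite senses. Keeping $\theta_0 = \pi - 2\alpha$, I would choose $\psi$ so that $\esssupp(R_\psi k)\subseteq Q_1^{\theta_0}$; matching the centre directions of the wedges fixes $\psi$ uniquely. The one genuine computation is to check that this same $\psi$ simultaneously forces $\esssupp(R_\psi k)_F = \esssupp R_{-\psi}k_F \subseteq Q_2^{\theta_0}$, and it does, precisely because the rotation $R_{-\pi/2}$ relating the two cones in (i) is compensated by the opposite rotation of $k_F$ relative to $k$ and by the flip $J$. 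Hence condition (i) is equivalent to $R_\psi k \in \K_{\theta_0}$.

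It remains to match condition (ii) with $\S_{\theta_0}$. That space is spanned by $L^2(\bR_+)\otimes H^2(\bR)$ and its rotate $R_{\theta_0}(L^2(\bR_+)\otimes H^2(\bR))$, so applying $R_{-\psi}$ sends these to the two extremal subspaces of (ii). To display them in the asserted form I would use that rotation by $\pi/2$ permutes the quarter subspaces obtained from $L^2(\bR_+)\otimes H^2(\bR)$, together with Paley--Wiener in the form $FH^2(\bR)=L^2(\bR_+)$, which governs how the $H^2(\bR)$ versus $\overline{H^2(\bR)}$ orientation is carried along; this identifies each $R_{-\psi}$-image explicitly as a rotate of the quarter subspace $L^2(\bR_+)\otimes H^2(\bR)$. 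Thus (ii) is equivalent to $R_\psi k\in \S_{\theta_0}$, and since Lemma~\ref{key2} gives $\K_{\theta_0}=\S_{\theta_0}$, applying $R_{-\psi}$ yields (i)$\Leftrightarrow$(ii).

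The main obstacle is the bookkeeping in the middle step. Lemma~\ref{key2} is stated asymmetrically---the wedges $Q_1^{\theta_0}$, $Q_2^{\theta_0}$ and the generators of $\S_{\theta_0}$ single out $H^2(\bR)$ in one coordinate---whereas conditions (i) and (ii) are symmetric in $k$ and $\hat k$. One must therefore verify that a single rotation $R_\psi$ and the single parameter $\theta_0=\pi-2\alpha$ convert the symmetric cone data into the asymmetric wedge-and-quarter-space data with every rotation angle and every $H^2(\bR)$-versus-$\overline{H^2(\bR)}$ orientation correct. The identities $k_F=J\hat k$, $F_2R_\theta=R_\theta F_2$ and $JR_\theta=R_{-\theta}J$ are exactly what make this alignment consistent.
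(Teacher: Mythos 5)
Your strategy---rotate $k$ so that Lemma \ref{key2} applies, then rotate the two generating subspaces of $\S_{\theta_0}$ back---is exactly the intended derivation (the paper offers no other proof of the corollary), and the first half of your argument is correct. With $\theta_0=\pi-2\alpha$, condition (i) is equivalent to $\esssupp(R_\psi k)\subseteq Q_1^{\theta_0}$ together with $\esssupp(R_\psi k)_F\subseteq Q_2^{\theta_0}$ for the single rotation $R_\psi=R_{-(\pi/2+\alpha)}$ (equivalently $\psi=3\pi/2-\alpha$), and the identity $(R_\psi k)_F=R_{-\psi}k_F$ is precisely what makes the two alignments consistent. So your equivalence (i) $\Leftrightarrow R_\psi k\in\K_{\theta_0}$ is sound.

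The gap is the step you assert rather than compute: that $R_{-\psi}$ carries the generators of $\S_{\theta_0}$ onto the two subspaces printed in (ii). Carrying it out, the paper's (clockwise) convention gives $R_{\pi/2}\bigl(L^2(\bR_+)\otimes H^2(\bR)\bigr)=H^2(\bR)\otimes L^2(\bR_-)$ and $R_{-\pi/2}\bigl(L^2(\bR_+)\otimes H^2(\bR)\bigr)=\ol{H^2(\bR)}\otimes L^2(\bR_+)$, while $R_{-\psi}=R_{\pi/2+\alpha}$ and $R_{-\psi}R_{\theta_0}=R_{-(\pi/2+\alpha)}$, so that
\begin{align*}
R_{-\psi}\,\S_{\theta_0}=\overline{span\bigl\{R_{\alpha}\bigl(H^2(\bR)\otimes L^2(\bR_-)\bigr),\ R_{-\alpha}\bigl(\ol{H^2(\bR)}\otimes L^2(\bR_+)\bigr)\bigr\}}^{\|\cdot\|},
\end{align*}
the rotations of the quarter space by $\pm(\pi/2+\alpha)$. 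This is not the span printed in (ii): the printed angles are $\pm\alpha/2$ instead of $\pm\alpha$, and the second printed space has $H^2(\bR)$ where $\ol{H^2(\bR)}$ must appear. The discrepancy is not cosmetic. First, $H^2(\bR)\otimes L^2(\bR_+)$ is not a rotation of $L^2(\bR_+)\otimes H^2(\bR)$ at all: the rotation orbit of the quarter space through multiples of $\pi/2$ is $L^2(\bR_+)\otimes H^2(\bR)$, $H^2(\bR)\otimes L^2(\bR_-)$, $L^2(\bR_-)\otimes\ol{H^2(\bR)}$, $\ol{H^2(\bR)}\otimes L^2(\bR_+)$, so your ``rotation by $\pi/2$ permutes the quarter subspaces'' step cannot produce it. Second, the printed (ii) does not even imply (i): a nonzero element of $R_{\alpha/2}\bigl(H^2(\bR)\otimes L^2(\bR_-)\bigr)$ has a nonzero $H^2$-slice along some rotated horizontal line; that line eventually runs inside the open cone $C_\alpha$ (its points' angular coordinate tends to $-\alpha/2$), and a nonzero $H^2(\bR)$ function cannot vanish on a set of positive measure. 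Thus your concluding claim ``(ii) $\Leftrightarrow R_\psi k\in\S_{\theta_0}$'' is false for the statement as printed. Your plan, executed in full, proves the corollary with (ii) replaced by the closed linear span of $R_{\alpha}\bigl(H^2(\bR)\otimes L^2(\bR_-)\bigr)$ and $R_{-\alpha}\bigl(\ol{H^2(\bR)}\otimes L^2(\bR_+)\bigr)$, which is what the Introduction's description of the extremal subspaces as rotations of $L^2(\bR_+)\otimes H^2(\bR)$ requires; the printed $\pm\alpha/2$ would be right only if $\alpha$ denoted the full aperture of the cone rather than its half-angle. The lesson is that the bookkeeping you postponed is where all the content of the corollary lies, and deferring it caused you to certify an identification that fails.
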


Our next goal is to make use the previous  lemma to show that 
\begin{align*}
Alg\L\cap{\C_2}\,=\, \overline{(\A_p\cap{\C_2})+\I}^{\|\cdot\|}.
\end{align*}
First, we determine the Hilbert - Schmidt operators of $Alg\L$.
\begin{lemma}\label{l:supportinclusion} 
 $Alg\L\cap{\C_2}\subseteq \K_{\pi/4}$
 \end{lemma}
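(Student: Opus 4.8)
The plan is to exploit the decomposition $\L=\N_v^-\cup\N_a^+$, so that $Alg\L=Alg\N_v^-\cap Alg\N_a^+$, and to convert each subnest-invariance condition into a support condition, one on $k$ and one on its Fourier-conjugated kernel $k_F$. I expect these to be precisely $\esssupp k\subseteq Q_1^{\pi/4}$ and $\esssupp k_F\subseteq Q_2^{\pi/4}$, which is exactly the assertion $k\in\K_{\pi/4}$.

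For the Volterra half, recall that a Hilbert--Schmidt operator $Intk$ leaves $L^2([\lambda,\infty))$ invariant precisely when the compression $P_{(-\infty,\lambda)}\,Intk\,P_{[\lambda,\infty)}$ vanishes, that is, when $k(x,y)=0$ for almost every $(x,y)$ with $x<\lambda\le y$. The nontrivial members of $\N_v^-$ are the spaces $L^2([\lambda,\infty))$ with $\lambda\le0$, and imposing the above for all such $\lambda$ (it suffices to run over rational $\lambda\le0$, so no measure-theoretic difficulty arises from the uncountable family) forces $k=0$ almost everywhere on
\[
\bigcup_{\lambda\le0}\{(x,y):x<\lambda\le y\}=\{x<0\}\cap\{x<y\}.
\]
A direct check in polar coordinates shows that the complement of this set is the sector $Q_1^{\pi/4}$, giving $\esssupp k\subseteq Q_1^{\pi/4}$.

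For the analytic half I would pass through the Fourier transform. Since $F(e^{i\beta x}f)=D_\beta(Ff)$ and $FH^2(\bR)=L^2(\bR_+)$ by Paley--Wiener, one has $F(e^{i\beta x}H^2(\bR))=L^2([\beta,\infty))$, so $F\N_a^+=\{L^2([\beta,\infty)):\beta\ge0\}$ is again a Volterra subnest. As $F$ is unitary, $Intk\in Alg\N_a^+$ if and only if $Intk_F=FIntkF^\ast$ leaves each $L^2([\beta,\infty))$, $\beta\ge0$, invariant. Applying the same compression criterion to $k_F$ now forces $k_F=0$ almost everywhere on $\bigcup_{\beta\ge0}\{x<\beta\le y\}=\{y>x\}\cap\{y>0\}$, whose complement is the sector $Q_2^{\pi/4}$; hence $\esssupp k_F\subseteq Q_2^{\pi/4}$. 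Combining the two support statements yields $k\in\K_{\pi/4}$.

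The computations are essentially routine, and the only points needing care are the measure-theoretic reduction of the uncountable families of invariance conditions to countable ones and the exact bookkeeping of the angular sectors---in particular the ``bend'' along the diagonal $y=x$, which is where the use of the half-nests $\N_v^-,\N_a^+$ rather than the full nests $\N_v,\N_a$ manifests itself, and what distinguishes $\K_{\pi/4}$ from the full-triangle support of $\A_p\cap\C_2$. As a consistency test I would check the orientation conventions for $R_\theta$ and the sign in $k_F=JF_2k$ against the earlier identification $\A_p\cap\C_2=\{Intk:k\in R_{-\pi/4}(H^2(\bR)\otimes L^2(\bR_-))\}$, where the full triangular supports (polar angle in $[-3\pi/4,\pi/4]$) must reappear.
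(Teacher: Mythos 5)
Your proof is correct and takes essentially the same route as the paper's: invariance under the Volterra subnest $\N_v^-$ yields $\esssupp\,k\subseteq Q_1^{\pi/4}$ directly, while conjugation by $F$ (using $FM_\beta F^*=D_\beta$ and Paley--Wiener) converts invariance under $\N_a^+$ into invariance of $Int\,k_F$ under $\{L^2([\beta,\infty)):\beta\geq 0\}$, yielding $\esssupp\,k_F\subseteq Q_2^{\pi/4}$. The differences are purely presentational (your compression criterion and explicit unions versus the paper's pointwise evaluations), and your sector bookkeeping matches the paper's definitions of $Q_1^{\pi/4}$ and $Q_2^{\pi/4}$.
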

 \begin{proof}
Suppose first that $k\in L^2(\bR^2)$ is a kernel function such that $Intk L^2[\lambda,+\infty)$ is a subspace of $L^2[\lambda,+\infty)$, for every $\lambda\leq0$. 
Let $x<\lambda <0,$ and
take $f\in L^2(\lambda,+\infty)$. Then
 \begin{align*}
\int_{\bR}k(x,y)f(y)dy=
(Int k f)(x) =0.
 \end{align*} 
Thus $k(x,y)=0$ for almost for every $y>\lambda$ and $\esssupp\,k\subseteq Q_1^{\pi/4}$.

Suppose next that  $k\in L^2(\bR^2)$ and  $Intk M_\lambda H^2(\bR)\subseteq M_\lambda H^2(\bR)$ for every $\lambda\geq0$. Then 
the following equivalent inclusions hold for all $\lambda > 0.$
\begin{align*}
&Intk M_\lambda H^2(\bR)\subseteq M_\lambda H^2(\bR),\\ 
&FIntk F^\ast F M_\lambda H^2(\bR)\subseteq F M_\lambda H^2(\bR),\,\\  
&FIntk F^\ast D_\lambda L^2(\bR_+)\subseteq D_\lambda L^2(\bR_+),\,\\ 
&FIntk F^\ast L^2[\lambda,+\infty)\subseteq  L^2[\lambda,+\infty).
\end{align*}
Thus $Int k_FL^2[\lambda,+\infty)\subseteq L^2[\lambda,+\infty)$, for every $\lambda\geq0$. Given $x<0$ and  $f\in L^2(\bR_+)$ we have
 \begin{align*}
\int_{\bR}k_F(x,y)f(y)dy=
(Int k_Ff)(x) =0
 \end{align*}
and so it follows that $k_F(x,y)=0$ for almost for every $y>0$.
Also, for $x\geq 0$ and $f\in L^2[\lambda,+\infty)$ with $\lambda>x$, we again have
$(Int k_Ff)(x)=0$ and so $\esssupp\,k_F\subseteq Q_2^{\pi/4}$, as required.
 \end{proof}
\begin{lemma}
The algebras $\A_{ph}\cap{\C_2}$ and $Alg\L\cap{\C_2}$ coincide.
\end{lemma}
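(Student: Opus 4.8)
The plan is to prove the one nontrivial inclusion $Alg\,\L\cap\C_2\subseteq\A_{ph}\cap\C_2$, the reverse being immediate from $\A_{ph}\subseteq Alg\,\L$; along the way this also yields the advertised identity $Alg\,\L\cap\C_2=\overline{(\A_p\cap\C_2)+\I}^{\|\cdot\|}$. First I would apply Lemma \ref{l:supportinclusion} to place the kernel of every Hilbert--Schmidt operator in $Alg\,\L$ inside $\K_{\pi/4}$, and then apply Lemma \ref{key2} with $\theta_0=\pi/4$ to rewrite $\K_{\pi/4}=\S_{\pi/4}$, the closed linear span of the two quarter-subspaces $L^2(\bR_+)\otimes H^2(\bR)$ and $R_{\pi/4}(L^2(\bR_+)\otimes H^2(\bR))$. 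It then suffices to show that the integral operators whose kernels lie in each of these two generating subspaces already belong to $\A_{ph}\cap\C_2$.

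Next I would identify each generating subspace with an operator space that is visibly contained in $\A_{ph}\cap\C_2$. For the untilted subspace $L^2(\bR_+)\otimes H^2(\bR)$ I would observe that a Hilbert--Schmidt operator $Intk$ has range inside $L^2(\bR_+)$ and annihilates $H^2(\bR)$ precisely when $k\in L^2(\bR_+)\otimes H^2(\bR)$; hence these are exactly the Hilbert--Schmidt operators of the ideal $\I=P_+B(L^2(\bR))(I-Q_+)$, and by Lemma \ref{key1} each of them lies in $\A_{ph}\cap\C_2$. For the tilted subspace I would use the rotation identity $R_{\pi/4}(L^2(\bR_+)\otimes H^2(\bR))=R_{-\pi/4}(H^2(\bR)\otimes L^2(\bR_-))$, whose right-hand side is exactly the kernel description of $\A_p\cap\C_2$ recorded before Lemma \ref{key1}; since $\A_p\subseteq\A_{ph}$, these operators also lie in $\A_{ph}\cap\C_2$.

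Finally I would assemble the pieces. Because $k\mapsto Intk$ is an isometry of $L^2(\bR^2)$ onto $\C_2$, the closed span $\S_{\pi/4}$ of the two quarter-subspaces corresponds under this isometry to the Hilbert--Schmidt-norm closure of $(\A_p\cap\C_2)+(\I\cap\C_2)$. Since $\A_{ph}$ is weak operator closed, $\A_{ph}\cap\C_2$ is closed in the Hilbert--Schmidt norm (which dominates the operator norm), so it contains the closure of the sum of the two subspaces it already contains. Chaining the inclusions,
\[
Alg\,\L\cap\C_2\subseteq\{Intk:k\in\S_{\pi/4}\}=\overline{(\A_p\cap\C_2)+\I}^{\|\cdot\|}\subseteq\A_{ph}\cap\C_2\subseteq Alg\,\L\cap\C_2,
\]
forces equality throughout and gives the claim.

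The step demanding the most care is the bookkeeping of the rotation conventions in the second paragraph, namely confirming $R_{\pi/4}(L^2(\bR_+)\otimes H^2(\bR))=R_{-\pi/4}(H^2(\bR)\otimes L^2(\bR_-))$ so that the tilted generator of $\S_{\pi/4}$ matches the kernel description of $\A_p\cap\C_2$ exactly. I would verify the equivalent identity $R_{\pi/2}(L^2(\bR_+)\otimes H^2(\bR))=H^2(\bR)\otimes L^2(\bR_-)$ by tracking both the essential support (the right half-plane rotating to the lower half-plane under a clockwise quarter turn) and the Paley--Wiener spectral condition (the $H^2(\bR)$ factor in the $y$-variable being carried into the $x$-variable), using that $F_2$ commutes with rotations. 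The genuinely hard analytic content has already been absorbed into Lemmas \ref{key2}, \ref{key1} and \ref{l:supportinclusion}, so what remains here is to align their conclusions correctly.
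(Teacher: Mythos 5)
Your proposal is correct and follows essentially the same route as the paper: Lemma \ref{l:supportinclusion} and Lemma \ref{key2} give $Alg\,\L\cap\C_2\subseteq\S_{\pi/4}$, the two generating subspaces of $\S_{\pi/4}$ are identified with the kernels of $\I\cap\C_2$ and of $\A_p\cap\C_2$, and Lemma \ref{key1} together with $\A_p\subseteq\A_{ph}$ and norm-closedness of $\A_{ph}\cap\C_2$ closes the chain of inclusions. The only difference is that you spell out the rotation bookkeeping and the closure argument that the paper leaves implicit, and your verification of $R_{\pi/2}(L^2(\bR_+)\otimes H^2(\bR))=H^2(\bR)\otimes L^2(\bR_-)$ is consistent with the paper's clockwise convention.
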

\begin{proof}
By the previous lemma and Lemma \ref{key2}, we have $Alg\L\cap{\C_2}\subseteq \S_{\pi/4}$, where 
\begin{align*}
\S_{\pi/4}&=\overline{R_{\pi/4}( L^2(\bR_+)\otimes H^2(\bR))+ L^2(\bR_+)\otimes H^2(\bR)}^{\|\cdot\|}\\&=
\overline{(\A_p\cap{\C_2})+\I}^{\|\cdot\|}.
\end{align*}
Applying Lemma \ref{key1}, the desired inclusion follows. 
\end{proof}

We have noted in Section 2 that $\A_p\cap \C_2$ contains an operator norm bounded sequence which is an approximate identity for the space of all Hilbert-Schmidt operators. Since this sequence also lies in $\A_{ph}$ it follows from the previous lemma that the weak operator topology closures of $\A_{ph}\cap \C_2$ and $Alg \L\cap \C_2$ coincide.
Thus, the following theorem is proved.
\begin{thm}\label{t:AphReflexive}
The operator algebra $\A_{ph}$ is reflexive, with $\A_{ph} = Alg \L=\overline{\A_p+\I}^{WOT}$.
\end{thm}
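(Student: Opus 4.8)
The plan is to use the bounded approximate identity of Proposition~\ref{p:bai} to promote the Hilbert--Schmidt-level equality $\A_{ph}\cap\C_2 = Alg\L\cap\C_2$ established in the preceding lemma to an equality of the full weakly closed algebras, and then to read off reflexivity from $Lat\A_{ph}=\L$. Throughout, the decisive feature is that the approximating sequence can be taken inside the \emph{smaller} algebra $\A_p$, so that a single sequence serves both $\A_{ph}$ and $Alg\L$ at once.

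First I would dispose of the trivial containment: the generators of $\A_{ph}$ leave each subspace of $\L$ invariant, whence $\A_{ph}\subseteq Alg\L$, and it remains only to prove $Alg\L\subseteq\A_{ph}$. Let $\{E_n\}$ be the sequence from Proposition~\ref{p:bai}, bounded in operator norm with $E_n\to I$ strongly; since $E_n\in\A_p\cap\C_2$ and $\A_p\subseteq\A_{ph}\subseteq Alg\L$, each $E_n$ lies in $\A_{ph}$ and in $Alg\L$.

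The core step is an approximate-identity lift. Given $A\in Alg\L$, the product $AE_n$ is Hilbert--Schmidt (as $\C_2$ is an ideal) and lies in $Alg\L$ (as $Alg\L$ is an algebra containing $A$ and $E_n$), so $AE_n\in Alg\L\cap\C_2$. By the preceding lemma this equals $\A_{ph}\cap\C_2\subseteq\A_{ph}$. Since $\|AE_nf-Af\|\le\|A\|\,\|E_nf-f\|\to0$ for every $f$, we have $AE_n\to A$ strongly, hence weakly; as $\A_{ph}$ is weak operator topology closed, $A\in\A_{ph}$. This gives $Alg\L\subseteq\A_{ph}$ and so $\A_{ph}=Alg\L$. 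Invoking Lemma~\ref{l:LatAph}, $AlgLat\A_{ph}=Alg\L=\A_{ph}$, which is precisely reflexivity.

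For the remaining identity $\A_{ph}=\overline{\A_p+\I}^{WOT}$, the inclusion $\supseteq$ is immediate from $\A_p\subseteq\A_{ph}$ and $\I\subseteq Alg\L=\A_{ph}$. For $\subseteq$, I would take $A\in\A_{ph}=Alg\L$ and reuse the products $AE_n\in Alg\L\cap\C_2=\S_{\pi/4}$, which is the Hilbert--Schmidt closure of $(\A_p\cap\C_2)+(\I\cap\C_2)\subseteq\A_p+\I$; since Hilbert--Schmidt convergence forces weak convergence, each $AE_n$ belongs to $\overline{\A_p+\I}^{WOT}$, and passing to the weak limit yields $A\in\overline{\A_p+\I}^{WOT}$. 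As for difficulty, the genuine work has already been spent in the support and Hilbert-space-geometry arguments of Lemmas~\ref{key1} and~\ref{key2}; the theorem itself is a routine lift, and the only point needing care is exactly that the bounded approximate identity sits in $\A_p$, so that multiplication by it keeps $AE_n$ simultaneously inside $Alg\L$ and inside $\A_{ph}$, thereby transporting the Hilbert--Schmidt identity to the weak closure.
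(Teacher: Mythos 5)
Your proof is correct and takes essentially the same route as the paper: the trivial inclusion $\A_{ph}\subseteq Alg\L$, the identification $Alg\L\cap\C_2=\A_{ph}\cap\C_2$ from the preceding lemmas, and the bounded approximate identity of Proposition~\ref{p:bai} (which lies in $\A_p$ and hence in both algebras simultaneously) to lift the Hilbert--Schmidt equality to the weakly closed algebras. Your write-up simply makes explicit the approximate-identity argument and the identification $\A_{ph}=\overline{\A_p+\I}^{WOT}$ that the paper states tersely.
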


\section{The unitary automorphism group of $\A_{ph}$}

In the case of the parabolic algebra the group of unitary automorphisms, $X \to AdU(X) = UXU^*$, was identified in \cite{kat-pow-1} as the $3$-dimensional  Lie group of automorphisms $Ad(M_\lambda D_\mu V_t)$ for $\lambda, \mu$ and $t$ in $\bR$.
The following theorem shows  that the larger algebra $\A_{ph}$ is similarly rigid.

\begin{thm}\label{t:unitaryauto}
The unitary automorphism group of $A_{ph}$ is isomorphic to $\bR$ and equal to $\{Ad(V_t):t\in\bR\}$.
\end{thm}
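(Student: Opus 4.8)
The plan is to prove two inclusions: that every $Ad(V_t)$ preserves $\A_{ph}$, giving an injective homomorphism $\bR\to\Aut(\A_{ph})$, and that an arbitrary unitary normaliser coincides with some $V_t$ up to a scalar. For the forward direction I would first record the conjugation relations obtained by direct computation from $V_sf(x)=e^{s/2}f(e^sx)$, namely $V_sM_\lambda V_s^\ast=M_{e^s\lambda}$, $V_sD_\mu V_s^\ast=D_{e^{-s}\mu}$, together with $V_sH^2(\bR)=H^2(\bR)$ and $V_sL^2(\bR_+)=L^2(\bR_+)$. Since $e^s\lambda\ge 0$ and $e^{-s}\mu\ge 0$ whenever $\lambda,\mu\ge 0$, each $Ad(V_s)$ carries the generating semigroups of $\A_{ph}$ into $\A_{ph}$ for every real $s$, so each $Ad(V_t)$ is a unitary automorphism; the map $t\mapsto Ad(V_t)$ is a homomorphism, and it is injective because $Ad(V_t)=\mathrm{id}$ forces $M_{e^t\lambda}=M_\lambda$ and hence $t=0$.

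For the converse, let $U$ be a unitary with $U\A_{ph}U^\ast=\A_{ph}$. By the reflexivity established in Theorem \ref{t:AphReflexive}, $Lat\,\A_{ph}=\L$, so $Ad(U)$ induces an order-automorphism of $\L$. The key structural observation is that $\L$ consists of the two chains $\N_v^-$ and $\N_a^+$ joining $\{0\}$ and $L^2(\bR)$ and meeting only at these extremes, since $M_\beta H^2(\bR)\cap L^2(-\alpha,\infty)=\{0\}$ and neither space contains the other. Within this lattice $L^2(\bR_+)$ is the unique atom over $\{0\}$ while $H^2(\bR)$ is the unique co-atom under $L^2(\bR)$; since any order-automorphism fixes $\{0\}$ and $L^2(\bR)$, it must fix $L^2(\bR_+)$ and $H^2(\bR)$ and map each chain to itself. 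Consequently $UH^2(\bR)=H^2(\bR)$ and $UL^2(\bR_+)=L^2(\bR_+)$, so $U$ commutes with $Q_+$ and $P_+$, and there are increasing bijections $\sigma,\tau$ of $[0,\infty)$ with $\sigma(0)=\tau(0)=0$ such that $UL^2(-\alpha,\infty)=L^2(-\sigma(\alpha),\infty)$ and $UM_\beta H^2(\bR)=M_{\tau(\beta)}H^2(\bR)$.

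The crux is to show that conjugation by $U$ preserves the \emph{type} of the two abelian semigroups, that is $UM_\lambda U^\ast=M_{f(\lambda)}$ and $UD_\mu U^\ast=D_{g(\mu)}$. Writing $K=UxU^\ast$, so that $UM_\lambda U^\ast=e^{i\lambda K}$, I note that $M_\lambda$ leaves every $L^2(-\alpha,\infty)$ invariant for all real $\lambda$, so $e^{i\lambda K}$ leaves each $L^2(-\sigma(\alpha),\infty)$ invariant; being a group, this forces $K$ to commute with the projections of $\N_v^-$, and with $[K,P_+]=0$ one gets that $K|_{L^2(\bR_-)}$ is a multiplication operator. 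Symmetrically, since $D_\mu$ fixes every $M_\beta H^2(\bR)$, the operator $P=UpU^\ast$ commutes with the projections of $\N_a^+$ and is a multiplication in the frequency variable over $\bR_+$. Upgrading these half-line statements to the global assertions that $UxU^\ast$ is a function of $x$ and $UpU^\ast$ a function of $p$ is the main obstacle; I expect to obtain it from the $H^2(\bR)$-rigidity of Lemma \ref{1st lem} (deployed via the resolvents $1/(x+c)$ exactly as in the antisymmetry argument) combined with the Fourier duality $F\N_a=\N_v$. Granting this, the action of $M_{f(\lambda)}$ on $\N_a^+$, which is the translation $\beta\mapsto\beta+f(\lambda)$, must agree with the action $\beta\mapsto\tau(\lambda+\tau^{-1}(\beta))$ forced by the previous paragraph, yielding the Cauchy equation $\tau(u+\lambda)=\tau(u)+f(\lambda)$; with $\tau$ increasing and $\tau(0)=0$ this gives $\tau(\beta)=a\beta$ and $f(\lambda)=a\lambda$ for some $a>0$, and symmetrically $\sigma(\alpha)=c\alpha$, $g(\mu)=c\mu$.

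Finally, conjugating the Weyl relation $M_\lambda D_\mu=e^{i\lambda\mu}D_\mu M_\lambda$ by $U$ and comparing with the same relation applied to $M_{a\lambda}$ and $D_{c\mu}$ forces $ac=1$; setting $a=e^t$ gives $UM_\lambda U^\ast=M_{e^t\lambda}=V_tM_\lambda V_t^\ast$ and $UD_\mu U^\ast=D_{e^{-t}\mu}=V_tD_\mu V_t^\ast$ for all real $\lambda,\mu$ (the negative parameters following by taking adjoints). Hence $V_{-t}U$ commutes with every $M_\lambda$ and every $D_\mu$, and since this family generates $B(L^2(\bR))$ irreducibly, $V_{-t}U$ is a scalar multiple of the identity. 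Therefore $U=cV_t$ and $Ad(U)=Ad(V_t)$, which together with the forward direction identifies the unitary automorphism group of $\A_{ph}$ with $\{Ad(V_t):t\in\bR\}\cong\bR$.
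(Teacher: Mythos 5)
Your frame (reflexivity plus Lemma \ref{l:LatAph} gives an order automorphism of $\L$; the atom/co-atom argument fixing $L^2(\bR_+)$ and $H^2(\bR)$ and hence each chain) and your endgame (Cauchy equation, Weyl relation, irreducibility) are sound, but the proof has a genuine gap at exactly the step you yourself flag as ``the main obstacle'' and then grant: that $UM_\lambda U^\ast = M_{f(\lambda)}$ and $UD_\mu U^\ast = D_{g(\mu)}$. That step is the entire content of the theorem; everything after ``Granting this'' is comparatively routine. What your commutant observation actually delivers is only that $UM_\lambda U^\ast$ decomposes as $M_{\psi_\lambda}\oplus B_\lambda$ relative to $L^2(\bR_-)\oplus L^2(\bR_+)$, with $\psi_\lambda$ a unimodular function and $B_\lambda$ an \emph{arbitrary} unitary on $L^2(\bR_+)$. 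Nothing yet controls $B_\lambda$, nothing forces $\psi_\lambda$ to be a character $e^{if(\lambda)x}$ rather than an arbitrary unimodular function, and nothing excludes an affine term: a priori one could have $UxU^\ast = ax+b$, i.e.\ $UM_\lambda U^\ast = c_\lambda M_{a\lambda}$ with constants $c_\lambda = e^{ib\lambda}$, and your final step would then fail silently, since $V_{-t}U$ would conjugate $M_\lambda$ to $c_\lambda M_\lambda$ instead of commuting with it, giving $U = \omega V_t D_\nu M_{\nu'}$ rather than a multiple of $V_t$; one must return to the normalisations $\sigma(0)=\tau(0)=0$ to kill these constants, which your write-up never does.

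The missing work is precisely what the paper's proof supplies, and it is nontrivial. The paper first uses the multiplicity-one continuous nest $\{L^2(-\lambda,0)\}$ to write $U = M_\psi C_f \oplus U_1$ on $L^2(\bR_-)\oplus L^2(\bR_+)$ (a structure theorem for $U$ itself, not just for $UM_\lambda U^\ast$); then, for the resolvents $h_c = U^\ast \tfrac{1}{x+c}$, it computes $(x+c)(UM_\lambda h_c)(x) = e^{i\lambda f(x)}$ on $\bR_-$ and applies Lemma \ref{1st lem} to produce a single extension $\phi_\lambda$ on all of $\bR$ with $UM_\lambda h_c = M_{\phi_\lambda}Uh_c$; density of the resolvents in $H^2(\bR)$ then gives $M_{\phi_\lambda}H^2(\bR) = M_\mu H^2(\bR)$, so $\phi_\lambda$ is inner and equals $e^{i\mu x}$ up to a unimodular constant, and the boundary condition $f(0)=0$ kills that constant and makes $f$ linear. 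A second application of Lemma \ref{1st lem} then yields $U = \eta V_t$ outright, with no need for the Weyl relation or irreducibility. Your instinct to deploy Lemma \ref{1st lem} via resolvents ``as in the antisymmetry argument'' is the right one, and applied to $UM_\lambda U^\ast$ (which preserves $H^2(\bR)$ and acts as $M_{\psi_\lambda}$ on $L^2(\bR_-)$) it would indeed give $UM_\lambda U^\ast|_{H^2} = M_{\phi_\lambda}|_{H^2}$ with $\phi_\lambda = c_\lambda e^{i\tau(\lambda)x}$, after which density of $H^2(\bR)+L^2(\bR_-)$ in $L^2(\bR)$ upgrades this to a global identity; but none of this is carried out in your proposal, so as written the central claim is assumed rather than proved.
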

\begin{proof}
Let $Ad(U)$ be a unitary automorphism of $A_{ph}$. Since $\A_{ph} = Alg \L$   it follows from Lemma \ref{l:LatAph} that  
\begin{equation}\label{1st eq}
UH^2(\bR)=H^2(\bR),\quad  UM_\lambda H^2(\bR)=M_\mu H^2(\bR)
\end{equation}
where $\mu\geq 0$ depends on $\lambda\geq 0$ and $\mu:\bR_+\to \bR_+$ is a continuous bijection.
Also
\begin{equation}\label{2nd eq}
UL^2(\bR_-)=L^2(\bR_-),\quad UL^2(-\lambda',0)=L^2(-\mu',0)
\end{equation}
with $\mu':\bR_+\to \bR_+$ is a continuous bijection.

Note that the subspaces $L^2(-\lambda,\infty)$ of $L^2(\bR_-)$ form a continuous nest of  multiplicity one and so it follows from ~\eqref{2nd eq} and elementary nest algebra theory (see Davidson \cite{dav} for example) that the unitary operator $U$ has the form $U=M_\psi C_f \oplus U_1$, where $\psi$ is a unimodular function
in $L^\infty(\bR_-)$, 
$f:\bR_-\rightarrow \bR_-$ is a strictly increasing bijection, and $C_f$ is the unitary composition operator on $L^2(\bR_-)$ with
\begin{align*}
(C_f g)(x)=(f'(x))^{1/2}g(f(x)).
\end{align*}

Let $h\in H^2(\bR)$. Then for 
$x\in\bR_-$ we have
\begin{align*}
(UM_\lambda h)(x)=(\psi C_f M_\lambda h)(x)=
\psi(x)e^{i\lambda f(x)}(f'(x))^{1/2}h(f(x))=e^{i\lambda f(x)}(Uh)(x)
\end{align*}
Take $c\in\bC^+$ and let $h_c\in H^2(\bR)$ be the function for which $(Uh_c)(x)=\frac{1}{x+c}$. Then
\[
(UM_\lambda h_c)(x)=e^{i\lambda f(x)}\frac{1}{x+c}
\]
and so
\[
 (x+c) g_{\lambda, c}(x)=e^{i\lambda f(x)},
\]

\noindent  where $g_{\lambda, c}= UM_\lambda h_c\in H^2(\bR)$. By  Lemma \ref{1st lem} the functions  $(x+c)g_{\lambda, c}(x)$ are independent of $c$ and agree for all real $x$.
Thus there is a unique extension  of $e^{i\lambda f(x)}$
to $\bR$, say $\phi_\lambda (x)$, such that
\begin{align*}
\phi_\lambda(x)=e^{i\lambda f(x)},\,\textrm{ for almost every } x\in\bR_-
\end{align*}
and
\begin{align*}
\phi_\lambda(x)=(x+c)g_{\lambda,c}(x), \,\textrm{ for almost every } x\in\bR.
\end{align*}
It now follows  that
\begin{align*}
UM_\lambda h_c= M_{\phi_\lambda} Uh_c.
\end{align*}
Since the closed linear span of the functions $h_c=U^\ast\frac{1}{x+c},\,c\in\bC^+$, is equal to $H^2(\bR)$, we obtain
\begin{align*}
UM_\lambda H^2(\bR)= M_{\phi_\lambda} UH^2(\bR).
\end{align*} 
Now \eqref{1st eq} implies  that
\begin{align*}
M_\mu H^2(\bR)=M_{\phi_\lambda}H^2(\bR).
\end{align*}
Therefore, $\phi_\lambda$ is inner and $\phi_\lambda(x)/e^{i\mu x}$ is equal to a unimodular constant $c_\lambda=e^{i\alpha_\lambda}$ depending on $\lambda$. Thus, for every $x\in\bR_-$, we have
\begin{align*}
i\lambda f(x)-i\mu x=i\alpha_\lambda
\end{align*}
or equivalently
\begin{align*}
f(x)=\frac{\mu}{\lambda} x+\frac{\alpha_\lambda}{\lambda}.
\end{align*} 
It follows that $\alpha_\lambda=0$, since $f(0)=0$, and that $\mu = \beta \lambda$ for some positive constant $\beta$.
Thus, for $x<0$,
\begin{align*}
 (C_fh)(x)=\beta^{1/2} h(\beta x)=
(V_{\log {\beta}}h)(x).
\end{align*}
Writing $t=\log\beta$, we have $Uh=\psi V_t h+U_1h$, and so with $h(x)=\frac{1}{x+d}$ and $x<0$ we have $(Uh)(x)=\psi(x)(V_t h)(x)$
and
\begin{align*}
\frac{e^tx+d}{e^{t/2}}(Uh)(x)
&=\psi(x).
\end{align*}
By Lemma \ref{1st lem} again, $\frac{e^tx+d}{e^{t/2}}Uh$ is determined by $\psi$ and there is 
analytic function $\phi$ such that
\begin{align*}
\frac{e^tx+d}{e^{t/2}}Uh=\phi.
\end{align*}  We conclude that $Uh=\phi V_t h$ for all such $h$ and so
$\phi$ is unimodular. Since $UH^2(\bR)=H^2(\bR)$ it follows that almost everywhere $\phi$ is a unimodular constant, $\eta$ say. Thus $U=\eta V_t$ and the proof is complete.
\end{proof}

\begin{rem}\label{r:Lst}
Note that the binest $ \L_{\alpha,\beta}$ given by
\[
 \L_{\alpha,\beta}=  \{0\}\cup\{L^2(\alpha',\infty), \alpha'\leq \alpha\}\cup \{e^{i\beta' x}H^2(\bR), \beta' \geq \beta\}\cup\{L^2(\bR)\}
\]
is equal to $D_{\alpha}M_{\beta}\L$. Thus  $ \L_{\alpha,\beta}$ is unitarily equivalent to $\L$. Also the unitary operator 
$U=D_{\alpha}M_{\beta}$ provides a unitary isomorphism
$Ad U: Alg \L \to Alg \L_{\alpha,\beta}$ between their reflexive algebras.
\end{rem}

\section{Further binests}

Once again, write $\N_v^-$ and $\N_a^+$ for the subnests of $\N_v$ and $\N_a$ whose union is $\L$. Also let $\N_v^+, \N_a^-$ be the
analogous subnests of  $\N_v$ and $\N_a$ for which
$P_-=(I-P_+)$ is the atomic interval projection for $\N_v^+$
and $Q_+$ is the atomic interval projection for  $\N_a^-$.

 By the F. and M. Riesz theorem the orbit of $H^2(\bR)$ under the Fourier-Plancherel transform $F$ is the subspace $H^2(\bR)$ together with the three subspaces
\[
FH^2(\bR)= L^2(\bR_+),\ \  F^2H^2(\bR)= \ol{H^2(\bR)},\ \
F^3H^2(\bR)= L^2(\bR_-).
\]
More generally,
the lattice $Lat \A_p$, with the weak operator topology for subspace projections, forms one quarter  of the Fourier-Plancherel sphere, and  the Fourier-Plancherel transform $F$ effects a period $4$ rotation of this sphere. (See \cite{kat-pow-2,lev-pow-2}.)

We now note that there are $8$ binest lattices which are pairwise order isomorphic as lattices and which have a similar status to 
$\L = \N_a^+ \cup  \N_v^-$. These fall naturally into two groupings of $4$.
Write $J$ for the unitary operator $F^2$, so that $Jf(x)=f(-x)$. (There will be no conflict here with notation from the previous section.)
Writing $\ol{K}$ for $\{\ol{f}:f\in K\}$,
these groupings are
\[
\N_a^+ \cup  \N_v^-, \ \ \N_v^+ \cup \overline{\N_a^-}, \ \ 
\overline{\N_a^+} \cup J\N_v^-, \ \ J\N_v^+\cup \N_a^-
\]
and 
\[ 
\N_a^- \cup  \N_v^+, \ \ \N_v^- \cup \overline{\N_a^+},\ \   
\overline{\N_a^-}\cup J\N_v^+, \ \ J\N_v^-\cup \N_a^+
\] 
forming the  orbits of the subspace lattices $\N_a^+ \cup  \N_v^-$ and 
$\N_a^- \cup  \N_v^+$ under $F$.
Note that the symbols ``$+$'' and ``$-$'' indicate the 
 ``upper'' and ``lower'' choices for the atomic interval of the nest. Since  $F$ induces an order isomorphism of the lattices, $F$ respects these symbols. By Theorem \ref{t:binestalgebra} and the identities
\[
FM_\lambda F^*=D_\lambda, \quad FD_\mu F^*=M_{-\mu},\quad FV_tF^*=V_{-t}
\]
it follows that the binest algebras for these $8$ binests are (respectively) equal to weak operator closed operator algebras for the following generating semigroup choices for $\{M_\lambda\},
\{D_\mu\}$ and $\{V_t\}$: 
\[
+++\quad -+-\quad --+\quad +--
\]
\[
++-\quad -++\quad ---\quad +-+
\]

View the lattice $\L=\N_a^+ \cup  \N_v^-$ as the right-handed choice in Figure \ref{f:binestL}, write $\L_r$ for $\L$, and view
$\L_l = \N_a^- \cup  \N_v^+$ as the left-handed choice. From the observations above the $8$ binests determine either $1$ or $2$ unitary equivalence classes of triple semigroup algebras. In fact there are two classes.

\begin{thm}\label{t:notequivalent}
The triple semigroup algebra $\A_{ph}=Alg\L_r$ is not unitarily equivalent to triple semigroup algebra $\A_{ph}^* = Alg \L_l$
\end{thm}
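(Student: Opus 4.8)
The plan is to argue by contradiction: suppose there is a unitary $U$ with $U(Alg\L_r)U^* = Alg\L_l$, and show that $U$ would be forced to reverse the sign in the Weyl commutation relation, which is impossible for a linear unitary. Since both binests are reflexive, $U$ carries $\L_r = Lat\,Alg\L_r$ onto $\L_l = Lat\,Alg\L_l$ as ordered lattices. By Proposition \ref{p:finiterank} the weakly closed ideal generated by the finite rank operators of $Alg\L_r$ is the elementary space $\I = P_+B(L^2(\bR))(I-Q_+)$, while the same argument applied to $\L_l$ produces $Q_+B(L^2(\bR))P_-$, where $P_+, P_-, Q_+$ denote the projections onto $L^2(\bR_+), L^2(\bR_-), H^2(\bR)$. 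As $AdU$ is an isomorphism carrying the one ideal onto the other, comparing the left and right supports of these elementary spaces forces $UP_+U^* = Q_+$ and $U(I-Q_+)U^* = P_-$, that is
\begin{align*}
UL^2(\bR_+)=H^2(\bR),&\quad UH^2(\bR)=L^2(\bR_+),\\
U\overline{H^2(\bR)}=L^2(\bR_-),&\quad UL^2(\bR_-)=\overline{H^2(\bR)}.
\end{align*}
Tracking the endpoints then shows that $U$ \emph{interchanges} the two nests: the nest $\N_a^+$ (running from $\{0\}$ to $H^2(\bR)$) must go to the only nest of $\L_l$ joining $\{0\}$ to $L^2(\bR_+)$, namely $\N_v^+$, and likewise $U\N_v^- = \N_a^-$. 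Order preservation gives $U(e^{i\beta x}H^2(\bR)) = L^2(\gamma(\beta),\infty)$ and $U(L^2(-\alpha,\infty)) = e^{-i\delta(\alpha)x}H^2(\bR)$ for increasing bijections $\gamma,\delta$ of $[0,\infty)$ fixing $0$.

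Next I would exploit the rigidity of the automorphism group. By Theorem \ref{t:unitaryauto}, and its evident analogue for $Alg\L_l$ (whose unitary automorphism group is again $\{AdV_t\}$), $AdU$ conjugates $\{AdV_t\}$ to $\{AdV_t\}$, so $UV_tU^* = \chi(t)V_{ct}$ for a unimodular character $\chi$ and a real constant $c\neq 0$. Comparing the action of $V_{ct}$ on $\N_v^+ = U\N_a^+$ with the identity $V_t(e^{i\beta x}H^2(\bR)) = e^{i\beta e^t x}H^2(\bR)$ (as in the proof of Lemma \ref{l:LatAph}) yields $\gamma(\beta e^t) = \gamma(\beta)e^{-ct}$; since $\gamma$ is increasing this forces $c<0$ and a power law $\gamma(\beta) = \kappa\beta^m$ with $m=-c>0$ and $\kappa>0$, and the same computation on the other nest gives $\delta(\alpha)=\rho\alpha^m$ with $\rho>0$.

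The crux is to upgrade these statements about the lattice to identities of operators, and I expect this to be the main obstacle, because the decisive phase in the commutation relation is invisible at the level of the invariant subspace lattice. Writing $A_\lambda = UM_\lambda U^*$ and $B_\mu = UD_\mu U^*$, one checks directly that $A_\lambda$ fixes every subspace of $\N_a^-$ and moves $\N_v^+$ forward in the $\beta$-parameter, while $B_\mu$ fixes $\N_v^+$ and moves $\N_a^-$ forward; these are the lattice actions of the translation and multiplication groups in $Alg\L_l$. By an analysis parallel to the proof of Theorem \ref{t:unitaryauto} (determining $U$ on the continuous multiplicity-one nest inside $L^2(\bR_-)$ by nest theory, and pinning it down analytically via Lemma \ref{1st lem}; equivalently, because the projections of $\L_l$ admit no common reducing subspace, so that an operator is determined up to a scalar by its action on $\L_l$), one concludes that $U$ conjugates the multiplication group to a translation group and conversely: $A_\lambda = \omega(\lambda)D_{\kappa\lambda}$ and $B_\mu = \omega'(\mu)M_{\rho\mu}$ for unimodular $\omega,\omega'$, with $\kappa,\rho>0$. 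In particular $A_\lambda$ is a genuine translation group, which in turn forces $m=1$.

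Finally the Weyl relation $M_\lambda D_\mu = e^{i\lambda\mu}D_\mu M_\lambda$ delivers the contradiction. Conjugating it by $U$ and cancelling the scalars $\omega,\omega'$ gives
\[
D_{\kappa\lambda}M_{\rho\mu} = e^{i\lambda\mu}M_{\rho\mu}D_{\kappa\lambda},
\]
whereas applying the Weyl relation itself to the parameters $\rho\mu$ and $\kappa\lambda$ gives
\[
D_{\kappa\lambda}M_{\rho\mu} = e^{-i\kappa\rho\lambda\mu}M_{\rho\mu}D_{\kappa\lambda}.
\]
Hence $\kappa\rho = -1$, which is impossible for positive constants $\kappa,\rho$. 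Therefore no such $U$ exists, and $\A_{ph}=Alg\L_r$ is not unitarily equivalent to $\A_{ph}^*=Alg\L_l$.
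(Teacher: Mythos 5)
Your frame is genuinely different from the paper's, and most of it is sound. Using reflexivity to turn the algebra equivalence into a lattice equivalence $U\L_r=\L_l$, invoking Proposition \ref{p:finiterank} on both sides and comparing supports of the elementary spaces $P_+B(L^2(\bR))(I-Q_+)$ and $Q_+B(L^2(\bR))P_-$ to force $UP_+U^*=Q_+$ and $UQ_+U^*=P_+$, deducing the interchange $U\N_a^+=\N_v^+$, $U\N_v^-=\N_a^-$, and extracting the power laws $\gamma(\beta)=\kappa\beta^m$, $\delta(\alpha)=\rho\alpha^m$ from the automorphism rigidity of Theorem \ref{t:unitaryauto} (which does transfer to $Alg\L_l$, since automorphism groups of an algebra and its adjoint coincide and $JV_tJ=V_t$) are all correct, modulo small continuity details that are easily supplied. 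The endgame is also correct and attractive: once one knows $UM_\lambda U^*=\omega(\lambda)D_{\kappa\lambda}$ and $UD_\mu U^*=\omega'(\mu)M_{\rho\mu}$ with $\kappa,\rho>0$, conjugating the Weyl relation gives $e^{i\lambda\mu}=e^{-i\kappa\rho\lambda\mu}$ for all $\lambda,\mu\geq 0$, hence $\kappa\rho=-1$, which is absurd. This is a conceptually appealing way to express the chirality: a unitary cannot reverse the sign of the Weyl phase.

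The gap is exactly at the step you yourself flag as the crux, and neither of your two justifications closes it. The parenthetical principle, that an operator is determined up to a scalar by its action on $\L_l$, is in fact true for unitaries, but the correct reason is antisymmetry, not irreducibility: a unitary fixing every member of $\L_l$ lies in $Alg\L_l\cap(Alg\L_l)^*$, which equals $\bC I$ by the theorem of Section 3 transferred to $\L_l$ by the flip $J$. More importantly, the principle is unusable here as stated: it only shows that \emph{if} some $D_\nu$ has the same lattice action as $A_\lambda=UM_\lambda U^*$, \emph{then} $A_\lambda$ is a unimodular multiple of $D_\nu$. But $D_\nu$ maps $L^2(s,\infty)$ onto $L^2(s+\nu,\infty)$, while $A_\lambda$ maps it onto $L^2(\gamma(\gamma^{-1}(s)+\lambda),\infty)$, and with $\gamma(\beta)=\kappa\beta^m$ these actions agree for a single constant $\nu$ only when $m=1$. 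Since you derive $m=1$ \emph{from} the conclusion $A_\lambda=\omega(\lambda)D_{\kappa\lambda}$, this route is circular. As for the alternative justification, Theorem \ref{t:unitaryauto} cannot simply be cited in parallel: its proof concerns a unitary mapping $\L$ onto $\L$ and rests on the composition-operator form $M_\psi C_f$ on the Volterra part, whereas your $A_\lambda$ fixes the analytic-type nest $\N_a^-$ elementwise and compresses $\N_v^+$ properly into itself. To handle it one must Fourier-conjugate (so that $FA_\lambda F^*$ commutes with the projections of a Volterra-type nest) and then rerun the entire nest-theory-plus-Lemma \ref{1st lem} analysis with $\ol{H^2(\bR)}$ in place of $H^2(\bR)$. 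That is precisely the analytic content of the paper's own, shorter proof, which conjugates $U$ itself by $F$, obtains $FU=M_\phi V_t$ by exactly this analysis, and concludes from $FUH^2(\bR)=\phi H^2(\bR)=\ol{H^2(\bR)}$ that $\ol{H^2(\bR)}$ would reduce the full multiplication group, a contradiction. So as written, your proposal defers, rather than supplies, the decisive step.
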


\begin{proof}
By Theorem \ref{t:binestalgebra}, $\A_{ph}^*=(Alg (\N_a^+ \cup  \N_v^-))^*$ which is the binest algebra for the union of the nests $(\N_a^+)^\bot$ and  $  (\N_v^-)^\bot$. We have 
\[
(\N_a^+)^\bot = J\N_a^-,\quad  (\N_v^-)^\bot = J\N_v^+
\]
and so it suffices to show that the binests
\[
\N_a^+ \cup  \N_v^-,\quad \N_a^- \cup  \N_v^+
\]
are not unitarily equivalent.

Suppose, by way of contradiction, that for some unitary $U$ the binest $U(\N_a^+ \cup  \N_v^-)$ coincides with $\N_a^- \cup  \N_v^+$. Then 
\[
FU(\N_a^+ \cup  \N_v^-) = F(\N_a^- \cup  \N_v^+) = \N_v^-\cup 
\ol{\N_a^+}
\]
We have $\N_v^- = \{L^2(\lambda,\infty), \lambda \leq 0\}$ and so by elementary nest algebra theory, as in the proof of Theorem \ref{t:unitaryauto},
\[
FU = M_\psi C_f\oplus X\]
for some unimodular function $\psi$ on $\bR_-$ and a composition operator $C_f$ on $L^2(\bR_-)$ associated with a continuous bijection
$f$.

We have
\[
FU: e^{i\lambda x}H^2(\bR)  \to  e^{-i\mu x}\ol{H^2(\bR) }
\]
with $\mu = \mu(\lambda):\bR_+\to \bR_+$ a bijection.

Take $h_c\in H^2(\bR) $ such that $FUh_c = \frac{1}{x-c} \in \ol{H^2(\bR) }$, with $c \in \bC^+$.  Then, for $x<0,\lambda >0$,
\begin{align*} 
(FUM_\lambda h_c)(x)& = (M_\psi C_fM_\lambda h_c)(x),\\
(FUM_\lambda h_c)(x)& = (e^{i\lambda f(x)}M_\psi C_f h_c)(x),\\
(FUM_\lambda h_c)(x)& = e^{i\lambda f(x)}(FUh_c)(x),\\
g_{\lambda,c}(x)& = e^{i\lambda f(x)}\frac{1}{x-c},
\end{align*}
where $g_{\lambda,c}= FUM_\lambda h_c \in \ol{H^2(\bR) }$. We may apply Lemma \ref{1st lem} as in the proof of Theorem \ref{t:unitaryauto} (although to $\ol{H^2(\bR)}$ functions here) to deduce that $FU = M_\phi V_t$ for some unimodular function $\phi$ and some real $t$. Thus we obtain 
\[
\ol{H^2(\bR)}= FU(H^2(\bR))=\phi(H^2(\bR)).
\]
This implies that $\ol{H^2(\bR)}$ is invariant for multiplication
by $M_\lambda$ for $\lambda>0$, as well for $\lambda < 0$, and so is 
a reducing subspace for the full multiplication group $\{M_\lambda:\lambda \in \bR\}$. This is a contradiction, as desired, since such spaces have the form $L^2(E)$.
\end{proof}

The fact that $\A_{ph}= Alg\L_r$ and $\A_{ph}^*=Alg\L_l$ fail to be unitarily equivalent expresses the following \emph{chirality} property. We say that a reflexive operator algebra $\A$ is \emph{chiral} if 

\bigskip

(i) $\A$ and $\A^*$ are not unitarily equivalent, and 
\\

(ii)  $Lat\A$ and $Lat \A^*$ are \emph{spectrally equivalent} in the sense that there is an order isomorphism
$\theta:Lat \A \to Lat \A^*$ such that for each pair of interval projections $\{P_1-P_2, Q_1-Q_2\}$ for $Lat \A$ the projection pairs
\[
\{P_1-P_2, Q_1-Q_2\},\quad \{\theta (P_1)-\theta (P_2), \theta (Q_1)-\theta (Q_2)\}
\]
are unitarily equivalent. 
\bigskip

While the spectral invariants for a pair of projections are well-known (Halmos \cite{hal}) there is presently no analogous classification of binests.



\begin{rem}\label{r:problems}
We remark that the companion binest algebra $Alg(\N_a^+\cup \N_v^+)$, in which both nests have an upper choice for the atomic interval, has no finite rank operators and it is unclear to us whether this reflexive algebra is a proper superalgebra of $\A_p$. 

The examination  of reflexivity for nonselfadjont operator algebras has its origins in Sarason's consideration \cite{sar} of  the Banach algebra $H^\infty(\bR)$ with the weak star topology. This algebra is isomorphic to both the basic Lie semigroup algebra, for $\bR_+$, and the discrete semigroup left regular representation algebra for $\bZ_+$. In the case of noncommutative discrete  groups the property of reflexivity has been obtained in many settings, including  free semigroups
(Davidson and Pitts \cite{dav-pit-1}), free semigroupoids
 (Kribs and Power \cite{kri-pow-1}),
and the discrete Heisenberg group (Anoussis, Katavolos and Todorov \cite{ano-kat-tod}). These operator algebras satisfy double commutant theorems and partly for this reason their algebraic and spatial properties, such as semisimplicity and invariant subspace structure, are somewhat more evident than in the case for Lie semigroup algebras. We note, for example, that the following questions seem to be open.
\medskip

Question 1. (See \cite{pow-seville}.) Does $\A_p$ contain nonzero operators with product zero? 
\medskip

Question 2. Does the Jacobson radical of $\A_p, \A_h$ or $\A_{ph}$
admit an explicit characterisation bearing some analogy to Ringrose's characterisation \cite{rin} for a nest algebra ?
\medskip

Question 3. (See \cite{lev-2}.) Is the Lie semigroup algebra of
an arbitrary irreducible representation of 
$SL_2(\bR_+)$ a reflexive operator algebra?

\end{rem}

\end{document}